
\documentclass[]{interact}

\usepackage{epstopdf}
\usepackage[caption=false]{subfig}

\usepackage[numbers,sort&compress]{natbib}
\bibpunct[, ]{[}{]}{,}{n}{,}{,}
\makeatletter
\def\NAT@def@citea{\def\@citea{\NAT@separator}}
\makeatother

\theoremstyle{plain}
\newtheorem{theorem}{Theorem}[section]

\newtheorem{proposition}[theorem]{Proposition}

\theoremstyle{definition}
\newtheorem{definition}[theorem]{Definition}
\newtheorem{example}[theorem]{Example}

\theoremstyle{remark}
\newtheorem{remark}[theorem]{Remark}

\usepackage{graphicx,color,enumerate,comment}
\usepackage{tikz}
\usepackage{pgfplots}
\begin{document}
\title{On approximate quasi Pareto solutions in nonsmooth semi-infinite interval-valued vector optimization problems}

\author{
\name{Nguyen Huy Hung\textsuperscript{a}, Hoang Ngoc Tuan\textsuperscript{a}, and Nguyen Van Tuyen\textsuperscript{a}\thanks{CONTACT Nguyen Van Tuyen. Email: nguyenvantuyen83@hpu2.edu.vn; tuyensp2@yahoo.com}}
\affil{\textsuperscript{a}Department of Mathematics, Hanoi Pedagogical University 2, Xuan Hoa, Phuc Yen, Vinh Phuc, Vietnam}
}

\maketitle

\begin{abstract}
This paper deals with approximate solutions of a nonsmooth semi-infinite programming with multiple interval-valued objective functions.  We first introduce four types of approximate quasi Pareto solutions of the considered problem by considering the lower-upper interval order relation and then apply some advanced tools of variational analysis and generalized differentiation to establish necessary optimality conditions for these approximate solutions. Sufficient conditions for approximate quasi Pareto solutions of such a problem are also provided by means of introducing the concepts of approximate (strictly) pseudo-quasi generalized convex functions defined in terms of the limiting subdifferential of locally Lipschitz functions. Finally, a Mond--Weir type dual model in approximate form is formulated, and weak, strong and converse-like duality relations are proposed. 	
\end{abstract}

\begin{keywords}
KKT optimality conditions; Duality relations; Limiting/Mordukhovich
subdifferential; Approximate quasi Pareto solutions; Nonsmooth semi-infinite interval-valued vector optimization
\end{keywords}
\begin{amscode}
90C29; 90C46; 90C70; 90C34;  49J52	
\end{amscode}

\section{Introduction}
\label{intro}
In this paper, we are interested in approximate solutions of the following semi-infinite programming with multiple interval-valued objective functions:  
\begin{align}\label{problem}\tag{SIVP}
LU-&\mathrm{Min}\, f(x):=(f_1(x), \ldots, f_m(x)) 
\\
&\text{s. t.}\ \ x\in \mathcal{F}:=\{x\in \Omega\,:\, g_t(x)\leq 0, t\in T\}, \notag
\end{align}
where $f_i\colon \mathbb{R}^n\to \mathcal{K}_c$, $i\in I:=\{1, \ldots, m\}$, are interval-valued functions defined by $f_i(x)=[f_i^L(x), f_i^U(x)]$, $f_i^L,$ $f_i^U\colon\mathbb{R}^n\to \mathbb{R}$  are locally Lipschitz functions satisfying $f_i^L(x)\leq f_i^U(x)$ for all $x\in\mathbb{R}^n$ and $i\in I$, $\mathcal{K}_c$ is the class of all closed and bounded intervals in $\mathbb{R}$, i.e., 
$$\mathcal{K}_c=\{[a^L, a^U]\,:\, a^L, a^U\in\mathbb{R}, a^L\leq a^U\},$$ 
$g_t\colon\mathbb{R}^n\to \mathbb{R}$, $t\in T$, are locally Lipschitz functions, $T$ is an arbitrary  set (possibly infinite), and $\Omega$ is a nonempty and closed subset of $\mathbb{R}^n$. Set $g_T:=(g_t)_{t\in T}$.

An interval-valued optimization problem is one of the deterministic optimization models to deal with the uncertain (incomplete) data. In the literature, there are three main approaches to model constrained optimization with uncertainty, say {\em stochastic programming approach}, {\em fuzzy  programming approach}, and {\em interval-valued programming approach}; see, e.g., \cite{Ben-Tal-Nemirovski-08,Ben-Tal-Nemirovski-09,Ben-Tal-Nemirovski-98,Slowinski-98,Wu-09-c,Rahimi}. Many methodologies have been developed to solve these problems. However, it should be noted here that the usual way is to transform stochastic and fuzzy optimization problems into the conventional optimization problems; frequently, these problems are very complicated. Consequently, stochastic and fuzzy optimization problems are not easy to be solved. In interval-valued optimization, the coefficients of objective and constraint functions are taken as closed intervals. Hence, the interval-valued optimization problem will be easier to be solved than a stochastic or fuzzy optimization one. That is the main reason why the interval-valued optimization problems have recently received increasing interest in optimization community; see, e.g., \cite{Chalco-Cano et.al.-13,Ishibuchi-Tanaka-90,Jennane,Kumar,Osuna-Gomez-17,Qian,Singh-Dar-15,Singh-Dar-Kim-16,Singh-Dar-Kim-19,Su-2020,Tung-2019,Tung-2019b,Tuyen-2021,Wu-07,Wu-09,Wu-09-b,Wu-09-c} and the references therein.  

In \cite{Ishibuchi-Tanaka-90}, Ishibuchi and Tanaka  introduced the lower-upper (LU) interval order relation and reformulated optimization problems with interval-valued objective functions as vector optimization problems using the order relation.  Thereafter, many authors have  investigated optimality conditions  of Karush--Kuhn-Tucker-type (KKT) and duality for optimization problems with one or multiple interval-valued objective functions  and finitely many  constraints; see, e.g., \cite{Chalco-Cano et.al.-13,Osuna-Gomez-17,Singh-Dar-15,Singh-Dar-Kim-16,Singh-Dar-Kim-19,Su-2020,Tuyen-2021,Wu-07,Wu-09,Wu-09-b,Wu-09-c}. 

The semi-infinite optimization problems play a very important role in optimization theory and their models cover optimal control, approximation theory,  semi-definite programming and numerous engineering problems, etc. However, in contrast with the case of optimization problems with finitely many  constraints, there have only been several papers dealing with KKT optimality conditions and duality for semi-infinite interval-valued optimization problems. For papers of this topic, we refer the reader to \cite{Jennane,Kumar,Qian,Tung-2019,Tung-2019b} and references given therein. In \cite{Kumar}, Kumar et al. established optimality conditions and duality theorems for  interval-valued programming problems with infinitely many constraints. Then, in \cite{Jennane,Qian,Tung-2019,Tung-2019b}, the authors presented optimality conditions and duality theorems for Pareto optimal solutions with respect to $LU$ interval order relation of a semi-infinite interval-valued vector optimization problem   under the convexity of the objective functions and constraints. To the best of our
knowledge, so far there have been no papers investigating optimality conditions and duality relations for approximate Pareto solutions of such a problem.  It should be noted here that the study of approximate solutions is very important in optimization  because, from the computational point of view, numerical algorithms usually generate only approximate solutions if we stop them after a finite number of steps. Furthermore, the solution set may be empty in the general noncompact case, whereas approximate solutions exist under very weak assumptions; see e.g.,  \cite{Son-Tuyen-Wen-19,Bao-et al,Tuyen-2021,Loridan-84,Chuong-Kim-16,TXS-20,Kim-Son-18,Chen13,Jahn-2011,Luc-89}. 

Motivated by the above observations, in this paper, we introduce four kinds of approximate quasi Pareto solutions with respect to $LU$ interval order relation for problems of the form \eqref{problem}. Then we employ the Mordukhovich/limiting subdifferential and the Mordukhovich/limiting normal cone (cf. \cite{mor06}) to examine  KKT optimality conditions and duality relations for these approximate solutions of problem \eqref{problem}. 

The paper is organized as follows. Section \ref{Preliminaries} provides some basic definitions from variational analysis, interval analysis and several auxiliary results. In Section \ref{Optimiality-conditions}, we introduce four kinds of approximate quasi Pareto solutions of problem \eqref{problem} and establish KKT-type necessary conditions for these approximate solutions.  With the help of approximate generalized convex functions defined in terms of the Mordukhovich/limiting subdifferential and the Mordukhovich normal cone, we provide sufficient conditions for approximate quasi Pareto solutions of the considered problem. Section \ref{Duality-Relations} is devoted to presenting duality relations for approximate quasi Pareto solutions. The conclusions
are presented in  the final section.

\section{ Preliminaries} \label{Preliminaries}
We use the following notation and terminology. Fix $n \in {\mathbb{N}}:=\{1, 2, \ldots\}$. The space $\mathbb{R}^n$ is equipped with the usual scalar product and  Euclidean norm. The closed unit ball of $\mathbb{R}^n$ is denoted by $\mathbb{B}_{\mathbb{R}^n}$.  We denote the nonnegative orthant in $\mathbb{R}^n$ by  $\mathbb{R}^n_+$.  The topological closure is denoted  by  $\mathrm{cl}\,{S}$.

\begin{definition}[{see \cite{mor06}}]{\rm  Given $\bar x\in  \mbox{cl}\,S$. The set
		\begin{equation*}
		N(\bar x; S):=\{z^*\in \mathbb{R}^n:\exists
		x^k\stackrel{S}\longrightarrow \bar x, \varepsilon_k\to 0^+, z^*_k\to z^*,
		z^*_k\in {\widehat N_{\varepsilon
				_k}}(x^k; S),\ \ \forall k \in\mathbb{N}\},
		\end{equation*}
		is called the {\em Mordukhovich/limiting normal cone}  of $S$ at $\bar x$, where
		\begin{equation*}
		\widehat N_\varepsilon  (x; S):= \bigg\{ {z^*  \in {\mathbb{R}^n} \;:\;\limsup_{u\overset{S} \rightarrow x}
			\frac{{\langle z^* , u - x\rangle }}{{\parallel u - x\parallel }} \leq \varepsilon } \bigg\}
		\end{equation*}
		is the set of  {\em $\varepsilon$-normals} of $S$  at $x$ and  $u\xrightarrow {{S}} x$ means that $u \rightarrow x$ and $u \in S$.
	}
\end{definition}

\medskip
Let  $\varphi \colon \mathbb{R}^n \to  \overline{\mathbb{R}}$ be an {\em extended-real-valued function}. The {\em  epigraph}  and  {\em domain} of $\varphi$ are denoted, respectively, by
\begin{align*}
\mbox{epi }\varphi&:=\{(x, \alpha)\in\mathbb{R}^n\times\mathbb{R} \,:\,  \alpha\geq \varphi(x) \},
\\
\mbox{dom }\varphi &:= \{x\in \mathbb{R}^n \,:\, \ \ |\varphi(x)|<+\infty \}.
\end{align*}

\begin{definition}[{see \cite{mor06}}]{\rm   
		Let $\bar x\in \mbox{dom }\varphi$. The set
		\begin{align*}
		\partial \varphi (\bar x):=\{x^*\in \mathbb{R}^n \,:\, (x^*, -1)\in N((\bar x, \varphi (\bar x)); \mbox{epi }\varphi )\},
		\end{align*}
		is called the {\it Mordukhovich/limiting subdifferential}  of $\varphi$ at $\bar x$. If $\bar x\notin \mbox{dom }\varphi$, then we put $\partial \varphi (\bar x)=\emptyset$.
	}
\end{definition}

We now summarize some properties of  the Mordukhovich subdifferential that will be used in the next section.
\begin{proposition}[{see \cite[Theorem 3.36]{mor06}}]\label{sum-rule} Let $\varphi_l\colon\mathbb{R}^n\to\overline{\mathbb{R}}$, $l=1, \ldots, p$, $p\geq 2$, be lower semicontinuous around $\bar x$ and let all but one of these
	functions be locally Lipschitz around $\bar x$. Then we have the following inclusion
	\begin{equation*}
	\partial (\varphi_1+\ldots+\varphi_p) (\bar x)\subset \partial  \varphi_1 (\bar x) +\ldots+\partial \varphi_p (\bar x).
	\end{equation*}
\end{proposition}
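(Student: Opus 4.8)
Since Proposition~\ref{sum-rule} is quoted verbatim as \cite[Theorem 3.36]{mor06}, the honest ``proof'' is a pointer to that reference; what follows is a sketch of how the argument is assembled, in case a self-contained reconstruction is wanted.

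The plan is first to reduce to the case $p=2$ by induction on $p$. Grouping $\varphi_1+\cdots+\varphi_p=(\varphi_1+\cdots+\varphi_{p-1})+\varphi_p$, one checks that the partial sum $\varphi_1+\cdots+\varphi_{p-1}$ is lower semicontinuous around $\bar x$ and that among the two summands of this grouping at most one is non-Lipschitz, so the two-function case applies and yields $\partial(\varphi_1+\cdots+\varphi_p)(\bar x)\subset \partial(\varphi_1+\cdots+\varphi_{p-1})(\bar x)+\partial\varphi_p(\bar x)$; a second application of the inductive hypothesis to the partial sum finishes the step.

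For $p=2$, assume without loss of generality that $\varphi_2$ is locally Lipschitz around $\bar x$, with some rank $\ell$ on a neighbourhood $U$ of $\bar x$. Take $x^*\in\partial(\varphi_1+\varphi_2)(\bar x)$. Using the finite-dimensional representation of the limiting subdifferential as the sequential outer limit of Fr\'echet (equivalently, $\varepsilon$-) subdifferentials, choose $x_k\to\bar x$ with $(\varphi_1+\varphi_2)(x_k)\to(\varphi_1+\varphi_2)(\bar x)$ and $x_k^*\to x^*$, $x_k^*\in\widehat\partial(\varphi_1+\varphi_2)(x_k)$; the Lipschitz property of $\varphi_2$ then forces $\varphi_2(x_k)\to\varphi_2(\bar x)$ and hence $\varphi_1(x_k)\to\varphi_1(\bar x)$. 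Apply the fuzzy (approximate) sum rule for the Fr\'echet subdifferential at each $x_k$ with tolerance $1/k$: this produces $u_k,v_k\to\bar x$ with $\varphi_1(u_k)\to\varphi_1(\bar x)$, $\varphi_2(v_k)\to\varphi_2(\bar x)$, and Fr\'echet subgradients $u_k^*\in\widehat\partial\varphi_1(u_k)$, $v_k^*\in\widehat\partial\varphi_2(v_k)$ with $\|x_k^*-u_k^*-v_k^*\|\le 1/k$. Since $v_k\in U$ for large $k$ and $\varphi_2$ is $\ell$-Lipschitz there, $\|v_k^*\|\le\ell$, so along a subsequence $v_k^*\to v^*\in\partial\varphi_2(\bar x)$; then $u_k^*=x_k^*-v_k^*+o(1)$ converges as well, to some $u^*\in\partial\varphi_1(\bar x)$, and $x^*=u^*+v^*\in\partial\varphi_1(\bar x)+\partial\varphi_2(\bar x)$, as required.

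The crucial point — and the only genuine obstacle — is the boundedness of $(v_k^*)$: it is exactly the hypothesis that all but one summand be locally Lipschitz that confines the auxiliary subgradients to a compact set and permits the limiting passage (and it is well known that the inclusion fails without it). The inductive reduction, the convergence of the function values, and the internal bookkeeping of the fuzzy sum rule are all routine. Alternatively, the same inclusion can be obtained geometrically from the limiting normal-cone intersection rule applied to the epigraphs $\mathrm{epi}\,\varphi_1$ and $\mathrm{epi}\,\varphi_2$ (suitably lifted into $\mathbb{R}^{n+1}$) together with the chain rule for the linear summation map $(x,\alpha_1,\alpha_2)\mapsto(x,\alpha_1+\alpha_2)$, the qualification condition there again being supplied by the Lipschitz property.
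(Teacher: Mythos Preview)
Your proposal is correct. The paper does not supply its own proof of this proposition; it simply cites \cite[Theorem 3.36]{mor06} and uses the result as a black box, exactly as you note at the outset. Your sketch --- induction to reduce to $p=2$, then the fuzzy sum rule for Fr\'echet subgradients followed by a compactness argument using the Lipschitz bound on all but one summand --- is the standard route taken in Mordukhovich's book, so there is nothing to compare against in the paper itself.
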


\begin{proposition}[{see \cite[Theorem 3.46]{mor06}}]\label{max-rule}
	Let $\varphi_l\colon\mathbb{R}^n\to\overline{\mathbb{R}}$, $l=1, \ldots, p$,  be  locally Lipschitz around $\bar x$. Then the function
	$ \phi(\cdot):=\max\{\varphi_l(\cdot):l=1, \ldots, p\}$
	is also locally Lipschitz around $\bar x$ and one has
	\begin{equation*}
	\partial \phi(\bar x)\subset \bigcup\bigg\{\partial\bigg(\sum_{l=1}^{p}\lambda_l\varphi_l\bigg)(\bar x)\;:\; (\lambda_1, \ldots, \lambda_p)\in\Lambda(\bar x)\bigg\},
	\end{equation*}
	where $\Lambda(\bar x):=\big\{(\lambda_1, \ldots, \lambda_p)\;:\; \lambda_l\geq 0, \sum_{l=1}^{p}\lambda_l=1, \lambda_l[\varphi_l(\bar x)-\phi(\bar x)]=0\big\}.$
\end{proposition}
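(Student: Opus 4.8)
The plan is to treat the two assertions separately. That $\phi:=\max\{\varphi_1,\dots,\varphi_p\}$ is locally Lipschitz around $\bar x$ is routine: if each $\varphi_l$ is Lipschitz with modulus $L_l$ on a common neighbourhood $U$ of $\bar x$, then the elementary estimate $|\max_l a_l-\max_l b_l|\le\max_l|a_l-b_l|$ shows that $\phi$ is Lipschitz on $U$ with modulus $\max_l L_l$. For the subdifferential inclusion I would realize $\phi$ as a composition: put $F:=(\varphi_1,\dots,\varphi_p)\colon\mathbb{R}^n\to\mathbb{R}^p$, which is locally Lipschitz around $\bar x$, and let $\vartheta\colon\mathbb{R}^p\to\mathbb{R}$, $\vartheta(y_1,\dots,y_p):=\max_{1\le l\le p}y_l$, so that $\phi=\vartheta\circ F$. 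The point of this reformulation is that $\vartheta$ is convex and globally Lipschitz, hence its generalized differential is completely under control.

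Concretely, since $\vartheta$ is convex its limiting subdifferential agrees with the subdifferential of convex analysis, and the classical formula for the latter says that $\partial\vartheta(\bar y)$ is the convex hull of the canonical unit vectors $e_l\in\mathbb{R}^p$ for which $\bar y_l=\max_{1\le j\le p}\bar y_j$. Taking $\bar y=F(\bar x)$ and noting $\max_j\bar y_j=\phi(\bar x)$, this reads $\partial\vartheta(F(\bar x))=\{\lambda\in\mathbb{R}^p:\lambda_l\ge 0,\ \sum_{l=1}^{p}\lambda_l=1,\ \lambda_l=0\ \text{whenever}\ \varphi_l(\bar x)<\phi(\bar x)\}$, and the last requirement is precisely $\lambda_l[\varphi_l(\bar x)-\phi(\bar x)]=0$ for all $l$; hence $\partial\vartheta(F(\bar x))=\Lambda(\bar x)$. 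I would then invoke the chain rule for compositions of a locally Lipschitz (hence, in finite dimensions, strictly Lipschitzian) inner mapping with a locally Lipschitz outer function, \cite[Theorem 3.41]{mor06}, which gives $\partial\phi(\bar x)=\partial(\vartheta\circ F)(\bar x)\subset\bigcup\{\partial\langle\lambda,F\rangle(\bar x):\lambda\in\partial\vartheta(F(\bar x))\}$. Since every $\lambda\in\Lambda(\bar x)$ has nonnegative components, $\langle\lambda,F\rangle=\sum_{l=1}^{p}\lambda_l\varphi_l$, and combining the two displays yields exactly $\partial\phi(\bar x)\subset\bigcup\{\partial(\sum_{l=1}^{p}\lambda_l\varphi_l)(\bar x):\lambda\in\Lambda(\bar x)\}$.

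The step I expect to carry all the weight is the Lipschitzian chain rule; the rest is bookkeeping. It is worth stressing why a more hands-on argument is not enough. The tempting alternative, using only the sum rule of Proposition~\ref{sum-rule} and the underlying intersection rule for limiting normal cones, is epigraphical: one has $\mathrm{epi}\,\phi=\bigcap_{l=1}^{p}\mathrm{epi}\,\varphi_l$, and for every $l$ with $\varphi_l(\bar x)<\phi(\bar x)$ the point $(\bar x,\phi(\bar x))$ lies in the interior of $\mathrm{epi}\,\varphi_l$, so one may restrict to the active indices and apply the intersection rule to $N((\bar x,\phi(\bar x));\mathrm{epi}\,\phi)$; the qualification condition is automatic because a locally Lipschitz function has only the zero ``horizontal'' normal to its epigraph at a graph point, whence reading off the last coordinate of $(x^*,-1)$ forces the multipliers to sum to $1$. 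But this route only produces the coarser bound $\partial\phi(\bar x)\subset\bigcup\{\sum_{l=1}^{p}\lambda_l\partial\varphi_l(\bar x):\lambda\in\Lambda(\bar x)\}$, whose right-hand side is in general strictly larger than the one claimed here: for $p=2$, $\varphi_1(x)=-|x|$, $\varphi_2(x)=x^2-|x|$ at $\bar x=0$ one has $\partial(\lambda_1\varphi_1+\lambda_2\varphi_2)(0)=\{-1,1\}$ for every $(\lambda_1,\lambda_2)$ in the unit simplex, whereas $\frac12\partial\varphi_1(0)+\frac12\partial\varphi_2(0)=\{-1,0,1\}$ and the union of these sets over the simplex is the whole interval $[-1,1]$. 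So the sharpening to the ``subdifferential of the weighted sum'' form genuinely rests on using the convexity of $\vartheta$ through the Lipschitzian chain rule, which is where the substance of the proposition lies.
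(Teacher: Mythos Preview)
The paper does not supply a proof of this proposition; it is quoted as a known result from \cite[Theorem~3.46]{mor06}, so there is no ``paper's own proof'' to compare against. Your argument is correct and is in fact precisely the route taken in the cited source: write $\phi=\vartheta\circ F$ with $F=(\varphi_1,\dots,\varphi_p)$ and $\vartheta(y)=\max_l y_l$, compute $\partial\vartheta(F(\bar x))=\Lambda(\bar x)$ from convex analysis, and invoke the scalarized chain rule for strictly Lipschitzian mappings \cite[Theorem~3.41]{mor06}. Your side discussion contrasting this with the coarser epigraphical/intersection-rule approach, and the example showing the two right-hand sides genuinely differ, is accurate and illuminating but goes beyond what either the paper or the cited reference records.
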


\begin{proposition}[{\rm see \cite[Proposition 1.114]{mor06}}] \label{Fermat-rule} Let   $\varphi\colon\mathbb{R}^n\to\overline{\mathbb{R}}$  be finite at $\bar x$. If $\varphi$ has a local minimum at $\bar x$, then $ 0\in\partial\varphi(\bar x).$
\end{proposition}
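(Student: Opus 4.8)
The plan is to establish the inclusion $0\in\partial\varphi(\bar x)$ directly from the definitions, by showing that $(0,-1)$ is a regular (Fr\'echet) normal to $\mathrm{epi}\,\varphi$ at $(\bar x,\varphi(\bar x))$ with zero tolerance, and then promoting this to a limiting normal. First I would note that, since $\varphi$ is finite at $\bar x$, the point $(\bar x,\varphi(\bar x))$ lies in $\mathrm{epi}\,\varphi$, so $\partial\varphi(\bar x)$ is defined via $N((\bar x,\varphi(\bar x));\mathrm{epi}\,\varphi)$ as in the definition above, and it suffices to prove $(0,-1)\in N((\bar x,\varphi(\bar x));\mathrm{epi}\,\varphi)$.

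Next, using that $\bar x$ is a local minimizer, I would fix a neighborhood $U$ of $\bar x$ with $\varphi(x)\geq\varphi(\bar x)$ for all $x\in U$. For any $(x,\alpha)\in\mathrm{epi}\,\varphi$ with $x\in U$ and $(x,\alpha)\neq(\bar x,\varphi(\bar x))$ one has $\alpha\geq\varphi(x)\geq\varphi(\bar x)$, hence
\begin{equation*}
\frac{\langle(0,-1),\,(x,\alpha)-(\bar x,\varphi(\bar x))\rangle}{\|(x,\alpha)-(\bar x,\varphi(\bar x))\|}=\frac{\varphi(\bar x)-\alpha}{\|(x,\alpha)-(\bar x,\varphi(\bar x))\|}\leq 0.
\end{equation*}
Taking the $\limsup$ as $(x,\alpha)\xrightarrow{\mathrm{epi}\,\varphi}(\bar x,\varphi(\bar x))$ gives a value $\leq 0$, so $(0,-1)\in\widehat N_\varepsilon\big((\bar x,\varphi(\bar x));\mathrm{epi}\,\varphi\big)$ for every $\varepsilon\geq 0$.

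Finally, to reach the limiting normal cone I would feed into its definition the constant sequences $(x^k,\alpha^k)\equiv(\bar x,\varphi(\bar x))$ (which trivially converge to $(\bar x,\varphi(\bar x))$ within $\mathrm{epi}\,\varphi$), $\varepsilon_k:=1/k\to 0^+$, and $z^*_k\equiv(0,-1)\to(0,-1)$; by the previous step $z^*_k\in\widehat N_{\varepsilon_k}\big((\bar x,\varphi(\bar x));\mathrm{epi}\,\varphi\big)$, whence $(0,-1)\in N\big((\bar x,\varphi(\bar x));\mathrm{epi}\,\varphi\big)$, i.e. $0\in\partial\varphi(\bar x)$. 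I do not anticipate a real obstacle: the only delicate points are recognizing that local minimality is exactly what forces the numerator $\varphi(\bar x)-\alpha$ to be nonpositive on $\mathrm{epi}\,\varphi$ near the reference point, and observing that no lower semicontinuity or closedness of $\mathrm{epi}\,\varphi$ is needed, since the estimate is carried out only at the single point $(\bar x,\varphi(\bar x))$.
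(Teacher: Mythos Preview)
Your argument is correct: the computation showing $(0,-1)\in\widehat N_0\big((\bar x,\varphi(\bar x));\mathrm{epi}\,\varphi\big)$ via the local-minimum inequality $\alpha\ge\varphi(x)\ge\varphi(\bar x)$ is clean, and the passage to the limiting cone by constant sequences with $\varepsilon_k=1/k$ is legitimate since $\widehat N_0\subset\widehat N_\varepsilon$ for every $\varepsilon\ge 0$.

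There is nothing to compare your approach against, however: the paper does not prove this proposition at all. It is stated as a quoted background result with the citation ``see \cite[Proposition 1.114]{mor06}'' and is used as a black box in the proof of Theorem~\ref{Necessary-Theorem}. So you have supplied a self-contained proof where the paper simply defers to Mordukhovich's monograph.
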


Next we recall some definitions and facts in interval analysis. Let $A=[a^L, a^U]$ and $B=[b^L, b^U]$ be two intervals in $\mathcal{K}_c$. Then, by definition, we have
\begin{enumerate}[(i)]
	\item $A+B:=\{a+b\,:\, a\in A, b\in B\}=[a^L+b^L, a^U+b^U]$;
	\item $A-B:=\{a-b\,:\, a\in A, b\in B\}=[a^L-b^U, a^U-b^L]$;
	\item For each $k\in\mathbb{R}$,
	\begin{equation*}kA:=\{ka\,:\, a\in A\}=
	\begin{cases}
	[ka^L, ka^U]\ \ \text{if}\ \ k\geq 0,
	\\
	[ka^U, ka^L]\ \ \text{if}\ \ k < 0, 
	\end{cases}
	\end{equation*}
\end{enumerate}
see, e.g., \cite{Alefeld,Moore-1966,Moore-1979}  for more details.
It should be note that if $a^L=a^U$, then $A=[a, a]=a$ is a real number.

\begin{definition}[{see \cite{Ishibuchi-Tanaka-90,Wu-07}}]{\rm 
		Let $A=[a^L, a^U]$ and $B=[b^L, b^U]$ be two intervals in $\mathcal{K}_c$. We say that:
		\begin{enumerate}[(i)]
			\item $A\leq_{LU} B$ if $a^L\leq b^L$ and $a^U\leq b^U$.
			
			\item  $A<_{LU} B$ if $A\leq_{LU} B$ and $A\neq B$, or, equivalently, $A<_{LU} B$ if
			\\
			$ 
			\begin{cases}
			a^L<b^L
			\\
			a^U\leq b^U,
			\end{cases}
			$ 
			or \ \ \ \
			$ 
			\begin{cases}
			a^L\leq b^L
			\\
			a^U< b^U,
			\end{cases}
			$ 
			or \ \ \ \
			$ 
			\begin{cases}
			a^L< b^L
			\\
			a^U< b^U.
			\end{cases}
			$ 
			\item $A<^s_{LU} B$ if $a^L<b^L$ and $a^U<b^U$.
		\end{enumerate}
		We also define $A\leq_{LU} B$ (resp., $A<_{LU} B$, $A<^s_{LU} B$) if and only if 	$B\geq_{LU} A$ (resp., $B>_{LU} A$, $B>^s_{LU} A$).}
\end{definition}

\section{Approximate optimality conditions}\label{Optimiality-conditions}
Let $\mathbb{R}_+^{|T|}$ denote the set of all functions $\mu\colon T\to\mathbb{R}_+$ taking values $\mu_t:=\mu(t)=0$ for all $t\in T$ except for finitely many points. The active constraint multipliers set at $\bar x\in \Omega$ is defined by 
\begin{equation*}
A(\bar x):=\left\{\mu\in \mathbb{R}_+^{|T|}\;:\; \mu_tg_t(\bar x)=0, \ \ \forall t\in T\right\}.
\end{equation*}
For each $\mu\in A(\bar x)$, put $T(\mu):=\{t\in T\;:\; \mu_t\neq 0\}$.

We now introduce approximate solutions of \eqref{problem} with respect to $LU$ interval order relation.  Let $\epsilon_i^L$,  $\epsilon_i^U$, $i\in I$, be   real numbers satisfying $0\leq \epsilon_i^L\leq \epsilon_i^U$ for all $i\in I$  and put   $\mathcal{E}:=(\mathcal{E}_1, \ldots, \mathcal{E}_m)$, where $\mathcal{E}_i:=[\epsilon_i^L, \epsilon_i^U]$.
\begin{definition}\label{Defi-solution}{\rm
		Let $\bar x\in \mathcal{F}$. We say that:
		\begin{enumerate}[(i)]
			\item $\bar x$ is a {\em type-1 $\mathcal{E}$-quasi Pareto solution} of \eqref{problem}, denoted by $\bar x\in \mathcal{E}$-$\mathcal{S}_1^q\eqref{problem}$, if there is no $x\in \mathcal{F}$ such that
			\begin{equation*}
			\begin{cases}
			f_i(x)\leq_{LU} f_i(\bar x)-\mathcal{E}_i\|x-\bar x\|,\ \ &\forall i\in I,
			\\
			f_k(x)<_{LU} f_k(\bar x)-\mathcal{E}_k\|x-\bar x\|,\ \ &\text{for at least one}\ \ k\in I.
			\end{cases} 
			\end{equation*}
			
			\item $\bar x$ is a {\em type-2 $\mathcal{E}$-quasi Pareto solution} of \eqref{problem}, denoted by $\bar x\in \mathcal{E}$-$\mathcal{S}_2^q\eqref{problem}$, if there is no $x\in \mathcal{F}$ such that
			\begin{equation*}
			\begin{cases}
			f_i(x)\leq_{LU} f_i(\bar x)-\mathcal{E}_i\|x-\bar x\|,\ \ &\forall i\in I,
			\\
			f_k(x)<^s_{LU} f_k(\bar x)-\mathcal{E}_k\|x-\bar x\|,\ \ &\text{for at least one}\ \ k\in I.
			\end{cases} 
			\end{equation*}
			
			\item  $\bar x$ is a {\em type-1 $\mathcal{E}$-quasi-weakly Pareto solution} of \eqref{problem}, denoted by $\bar x\in \mathcal{E}$-$\mathcal{S}_1^{qw}\eqref{problem}$, if there is no $x\in \mathcal{F}$ such that
			\begin{equation*}
			f_i(x)<_{LU} f_i(\bar x)-\mathcal{E}_i\|x-\bar x\|,\ \ \forall i\in I.
			\end{equation*}
			
			\item $\bar x$ is a {\em type-2 $\mathcal{E}$-quasi-weakly Pareto solution} of \eqref{problem}, denoted by $\bar x\in \mathcal{E}$-$\mathcal{S}_2^{qw}\eqref{problem}$, if there is no $x\in \mathcal{F}$ such that
			\begin{equation*}
			f_i(x)<^s_{LU} f_i(\bar x)-\mathcal{E}_i\|x-\bar x\|,\ \ \forall i\in I.
			\end{equation*}    
		\end{enumerate}
	}
\end{definition}

It should be note that, if $\mathcal{E}=0$, i.e., $\epsilon^L_i=\epsilon^U_i=0$, $i\in I$, then the notion of a type-1 $\mathcal{E}$-quasi Pareto solution (resp.,  a type-2 $\mathcal{E}$-quasi Pareto solution,  a type-1 $\mathcal{E}$-quasi-weakly Pareto solution,  a type-2 $\mathcal{E}$-quasi-weakly Pareto solution)  defined above coincides with the one of a {\em type-1 Pareto solution} (resp., a {\em type-2 Pareto solution}, a {\em type-1  weakly Pareto solution}, a {\em type-2 weakly Pareto solution}); see, e.g., \cite{Tung-2019b,Osuna-Gomez-17,Wu-09}. 
\begin{remark}\label{Remark-3.1}
	{\rm The following relations are immediate from the definition of approximate optimal solutions.
		\begin{enumerate}[(i)]
			\item $\mathcal{E}$-$\mathcal{S}_1^q\eqref{problem} \subset \mathcal{E}$-$\mathcal{S}_2^q\eqref{problem} \subset \mathcal{E}$-$\mathcal{S}_2^{qw}\eqref{problem}$. 
			
			\item 	$\mathcal{E}$-$\mathcal{S}_1^q\eqref{problem} \subset \mathcal{E}$-$\mathcal{S}_1^{qw}\eqref{problem} \subset \mathcal{E}$-$\mathcal{S}_2^{qw}\eqref{problem}$. 
			
		\end{enumerate}	 
	}
\end{remark}

To obtain the necessary optimality conditions of KKT-type for approximate quasi Pareto solutions of \eqref{problem}, we consider the following constraint qualification condition.
\begin{definition}[see \cite{Chuong-09,Chuong-14}]{\rm 
		Let $\bar x\in\mathcal{F}$. We say that $\bar x$ satisfies the {\em limiting constraint qualification} if the following condition holds
		\begin{equation}\label{LCQ}\tag{LCQ}
		N(\bar x; \mathcal{F})\subseteq \bigcup_{\mu\in A(\bar x)}\left[\sum_{t\in T}\mu_t\partial g_t(\bar x)\right]+N(\bar x; \Omega).
		\end{equation}
	}
\end{definition}

It is worth to mention that the constraint qualification \eqref{LCQ} has been widely used in the literature and it covers almost the existing constraint qualifications of the Mangasarian--Fromovitz and the Farkas--Minkowski types; see e.g., \cite{mor06,Chuong-09,Chuong-14,Chuong-Kim-16,Jiao-Liguo-21}.  

\begin{theorem}\label{Necessary-Theorem} Let $\bar x\in\mathcal{F}$ and assume that $\bar x$ satisfies the \eqref{LCQ}. If $\bar x\in \mathcal{E}$-$\mathcal{S}_2^{qw}\eqref{problem}$, then there exist $\lambda^L, \lambda^U\in \mathbb{R}^m_+$ with $\sum_{i\in I}(\lambda^L_i+\lambda^U_i)=1$, and $\mu\in A(\bar x)$ such that
	\begin{equation}\label{Necessary-condition}
	0\in \sum_{i\in I} \left[\lambda^L_i\partial f^L_i(\bar x)+\lambda^U_i\partial f_i^U(\bar x)\right] +\sum_{t\in T}\mu_t\partial g_t(\bar x)+\sum_{i\in I}\left(\lambda^L_i\epsilon^U_i+\lambda^U_i\epsilon^L_i\right)\mathbb{B}_{\mathbb{R}^n}+N(\bar x; \Omega).
	\end{equation}
\end{theorem}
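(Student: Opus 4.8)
The plan is to reduce the vector problem to a scalar nonsmooth minimization problem to which the Fermat rule (Proposition~\ref{Fermat-rule}) applies, then successively invoke the max rule, the sum rule, and the constraint qualification. First I would exploit the definition of a type-2 $\mathcal{E}$-quasi-weakly Pareto solution: $\bar x\in\mathcal{E}$-$\mathcal{S}_2^{qw}\eqref{problem}$ means there is no $x\in\mathcal F$ with $f_i^L(x)<f_i^L(\bar x)-\epsilon_i^L\|x-\bar x\|$ and $f_i^U(x)<f_i^U(\bar x)-\epsilon_i^U\|x-\bar x\|$ simultaneously for all $i\in I$. Equivalently, setting
\begin{equation*}
\psi(x):=\max\Big\{\,\max_{i\in I}\big[f_i^L(x)-f_i^L(\bar x)+\epsilon_i^L\|x-\bar x\|\big],\ \max_{i\in I}\big[f_i^U(x)-f_i^U(\bar x)+\epsilon_i^U\|x-\bar x\|\big]\Big\},
\end{equation*}
the point $\bar x$ is a local (indeed global on $\mathcal F$) minimizer of $\psi$ over $\mathcal F$, since $\psi(x)<0$ for some feasible $x$ would produce the forbidden configuration, while $\psi(\bar x)=0$. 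Because each $f_i^L,f_i^U$ is locally Lipschitz and $x\mapsto\|x-\bar x\|$ is Lipschitz with value $0$ at $\bar x$, $\psi$ is locally Lipschitz and $\psi(\bar x)=0$.

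Next I would pass to the unconstrained problem by adding the indicator function $\delta(\cdot;\mathcal F)$: $\bar x$ minimizes $\psi+\delta(\cdot;\mathcal F)$ on $\mathbb R^n$, so by Proposition~\ref{Fermat-rule}, $0\in\partial(\psi+\delta(\cdot;\mathcal F))(\bar x)$. Applying the sum rule (Proposition~\ref{sum-rule}), with $\psi$ locally Lipschitz, gives $0\in\partial\psi(\bar x)+N(\bar x;\mathcal F)$, using $\partial\delta(\bar x;\mathcal F)=N(\bar x;\mathcal F)$. Now apply the max rule (Proposition~\ref{max-rule}) to $\psi$, which is a max of the $2m$ locally Lipschitz functions $\varphi_i^L(x):=f_i^L(x)-f_i^L(\bar x)+\epsilon_i^L\|x-\bar x\|$ and $\varphi_i^U(x):=f_i^U(x)-f_i^U(\bar x)+\epsilon_i^U\|x-\bar x\|$; since every one of these equals $0=\psi(\bar x)$ at $\bar x$, the active-index condition in $\Lambda(\bar x)$ is automatically satisfied, so there exist multipliers $\lambda_i^L,\lambda_i^U\ge 0$ with $\sum_{i\in I}(\lambda_i^L+\lambda_i^U)=1$ and
\begin{equation*}
0\in\partial\Big(\sum_{i\in I}\big[\lambda_i^L\varphi_i^L+\lambda_i^U\varphi_i^U\big]\Big)(\bar x)+N(\bar x;\mathcal F).
\end{equation*}
A further application of the sum rule distributes the subdifferential across the sum; since $\partial(\lambda\|\cdot-\bar x\|)(\bar x)=\lambda\mathbb B_{\mathbb R^n}$ for $\lambda\ge 0$ (the norm's limiting subdifferential at its minimizer is the unit ball), the $\|\cdot-\bar x\|$ terms contribute $\sum_{i\in I}(\lambda_i^L\epsilon_i^L+\lambda_i^U\epsilon_i^U)\mathbb B_{\mathbb R^n}$, and the constant shifts $-f_i^L(\bar x),-f_i^U(\bar x)$ drop out. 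This yields
\begin{equation*}
0\in\sum_{i\in I}\big[\lambda_i^L\partial f_i^L(\bar x)+\lambda_i^U\partial f_i^U(\bar x)\big]+\sum_{i\in I}\big(\lambda_i^L\epsilon_i^L+\lambda_i^U\epsilon_i^U\big)\mathbb B_{\mathbb R^n}+N(\bar x;\mathcal F).
\end{equation*}

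Finally I would invoke the \eqref{LCQ} to replace $N(\bar x;\mathcal F)$ by $\bigcup_{\mu\in A(\bar x)}\big[\sum_{t\in T}\mu_t\partial g_t(\bar x)\big]+N(\bar x;\Omega)$, which produces a multiplier $\mu\in A(\bar x)$ and the asserted inclusion~\eqref{Necessary-condition}. One cosmetic point to reconcile: the statement has the coefficient $\lambda_i^L\epsilon_i^U+\lambda_i^U\epsilon_i^L$ on the ball term, whereas the naive computation gives $\lambda_i^L\epsilon_i^L+\lambda_i^U\epsilon_i^U$; since $\mathbb B_{\mathbb R^n}$ is symmetric and $0\le\epsilon_i^L\le\epsilon_i^U$, one has $(\lambda_i^L\epsilon_i^L+\lambda_i^U\epsilon_i^U)\mathbb B_{\mathbb R^n}\subseteq(\lambda_i^L\epsilon_i^U+\lambda_i^U\epsilon_i^L+\text{something})\mathbb B$ only after regrouping — more precisely one should bound $\lambda_i^L\epsilon_i^L\le\lambda_i^L\epsilon_i^U$ and $\lambda_i^U\epsilon_i^U$ needs $\epsilon_i^U\le\epsilon_i^L$ which fails, so the cleaner route is to absorb the norm contribution more carefully by pairing the $L$-part's norm term with the bound $\epsilon_i^L\le\epsilon_i^U$ is not what is wanted; instead I would double-check the intended grouping and, if necessary, use the enlargement $(\lambda_i^L\epsilon_i^L+\lambda_i^U\epsilon_i^U)\mathbb B\subseteq(\lambda_i^L\epsilon_i^U+\lambda_i^U\epsilon_i^L)\mathbb B$, valid because $\lambda_i^L\epsilon_i^L+\lambda_i^U\epsilon_i^U\le\lambda_i^L\epsilon_i^U+\lambda_i^U\epsilon_i^L$ is equivalent to $(\lambda_i^U-\lambda_i^L)(\epsilon_i^U-\epsilon_i^L)\ge 0$, which need not hold — so the precise coefficient must be traced through the definition of $<^s_{LU}$ with the $L$-component paired against $\epsilon_i^U$ and the $U$-component against $\epsilon_i^L$, i.e. the roles of $\epsilon_i^L,\epsilon_i^U$ in the quasi-solution definition enter crosswise once one writes out $f_i(x)<^s_{LU}f_i(\bar x)-\mathcal E_i\|x-\bar x\|$ using $\mathcal E_i=[\epsilon_i^L,\epsilon_i^U]$ and rule (iii) for interval subtraction, $f_i(\bar x)-\mathcal E_i\|x-\bar x\|=[f_i^L(\bar x)-\epsilon_i^U\|x-\bar x\|,\ f_i^U(\bar x)-\epsilon_i^L\|x-\bar x\|]$. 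This last observation is in fact the crux: the main obstacle is getting the $\epsilon$-bookkeeping right, and it resolves cleanly once the interval arithmetic $A-B$ from item (ii) of the preliminaries is applied, after which $\varphi_i^L(x)=f_i^L(x)-f_i^L(\bar x)+\epsilon_i^U\|x-\bar x\|$ and $\varphi_i^U(x)=f_i^U(x)-f_i^U(\bar x)+\epsilon_i^L\|x-\bar x\|$, and everything goes through as above with the stated coefficients.
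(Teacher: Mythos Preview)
Your approach is correct and essentially identical to the paper's: scalarize via the max of the $2m$ shifted functions, apply Fermat's rule plus the sum rule to get $0\in\partial\psi(\bar x)+N(\bar x;\mathcal F)$, then the max rule, then \eqref{LCQ}. The only difference is presentational: the paper applies the interval subtraction rule $A-B=[a^L-b^U,a^U-b^L]$ at the very first step, so the scalarization is written from the outset with the crosswise pairing $f_i^L\leftrightarrow\epsilon_i^U$ and $f_i^U\leftrightarrow\epsilon_i^L$, exactly as you eventually arrive at in your final paragraph --- your detour through the wrong pairing and the failed inequality $(\lambda_i^U-\lambda_i^L)(\epsilon_i^U-\epsilon_i^L)\ge 0$ is unnecessary once that bookkeeping is done up front.
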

\begin{proof} Since $\bar x\in \mathcal{E}$-$\mathcal{S}_2^{qw}\eqref{problem}$, there is no $x\in\mathcal{F}$ such that $f_i(x)<^s_{LU} f_i(\bar x)-\mathcal{E}_i$, $\forall i\in I$, or, equivalently, 
	\begin{equation*}
	f_i^L(x)< f_i^L(\bar x)-\epsilon_i^U\|x-\bar x\| \ \ \text{and}\ \ f_i^U(x)< f_i^U(\bar x)-\epsilon_i^L\|x-\bar x\|, \ \ \forall i\in I.  
	\end{equation*}	 
	This means that for each $x\in\mathcal{F}$, there exists $i\in I$ such that	
	\begin{equation}\label{equ-1}
	f_i^L(x)\geq f_i^L(\bar x)-\epsilon_i^U\|x-\bar x\| \ \ \text{or}\ \ f_i^U(x)\geq f_i^U(\bar x)-\epsilon_i^L\|x-\bar x\|.  
	\end{equation}	 
	For each $x\in \mathbb{R}^n$, put 
	$$\varphi(x):=\max_{i\in I}\left\{f_i^L(x)- f_i^L(\bar x)+\epsilon_i^U\|x-\bar x\|, f_i^U(x)-f_i^U(\bar x)+\epsilon_i^L\|x-\bar x\|\right\}.$$
	It follows from \eqref{equ-1} that $\varphi(x)\geq 0$ for all $x\in\mathcal{F}$. Clearly, $\varphi(\bar x)=0$. Hence, $\bar x$ is a minimizer of  $\varphi$ on $\mathcal{F}$, or, equivalently, $\bar x$ is an optimal solution of the following unconstrained optimization problem 
	\begin{equation*}
	\text{minimizer}\ \ \varphi(x)+\delta(x; {\mathcal{F}}), x\in\mathbb{R}^n,
	\end{equation*} 
	where $\delta(\cdot,; {\mathcal{F}})$ is the indicator function of $\mathcal{F}$ and defined by $\delta(x; {\mathcal{F}})=0$ if $x\in\mathcal{F}$ and $\delta(x; {\mathcal{F}})=+\infty$ if $x\notin\mathcal{F}$. By Proposition \ref{Fermat-rule}, we have
	\begin{equation*}
	0\in \partial (\varphi+\delta(\cdot\,; {\mathcal{F}}))(\bar x).
	\end{equation*}
	Since functions $f^L_i$, $f^U_i$, $i\in I$, and $\|\cdot-\bar x\|$ are locally Lipschitz, $\varphi$ is   locally Lipschitz. Clearly, $\delta(\cdot\,; {\mathcal{F}})$ is lower
	semicontinuous around $\bar x$. Hence it follows from Proposition \ref{sum-rule} and the fact that $\partial \delta(\cdot\,; {\mathcal{F}})(\bar x)=N(\bar x; \mathcal{F})$ (see, e.g., \cite[Proposition 1.19]{mor06}) that 
	\begin{equation}\label{equ:2}
	0\in \partial \varphi(\bar x)+N(\bar x; \mathcal{F}).
	\end{equation}   
	By Proposition \ref{max-rule} and the fact that $\partial(\|\cdot-\bar x\|)(\bar x)=\mathbb{B}_{\mathbb{R}^n}$ (see \cite[Example 4, p. 198]{Ioffe79})), we obtain
	\begin{align*}
	\partial \varphi(\bar x) \subset \Big\{\sum_{i\in I}\lambda_i^L [\partial f_i^L(\bar x)+\epsilon_i^U\mathbb{B}_{\mathbb{R}^n}]+\sum_{i\in I}\lambda_i^U [\partial f_i^U(\bar x)+\epsilon_i^L\mathbb{B}_{\mathbb{R}^n}]\;:\; &\lambda_i^L, \lambda_i^U\geq 0, i\in I,
	\\
	&\sum_{i\in I}(\lambda_i^L+\lambda_i^U)=1\Big\}
	\\
	\subset \Big\{\sum_{i\in I}\left[\lambda^L_i\partial f^L_i(\bar x)+\lambda^U_i\partial_i^U(\bar x)\right] +\sum_{i\in I}\left(\lambda^L_i\epsilon^U_i+\lambda^U_i\epsilon^L_i\right)\mathbb{B}_{\mathbb{R}^n}\;:\; &\lambda_i^L, \lambda_i^U\geq 0, i\in I,
	\\
	&\sum_{i\in I}(\lambda_i^L+\lambda_i^U)=1\Big\}.
	\end{align*}
	Combining this, \eqref{equ:2} and the \eqref{LCQ}, we get the assertion.
\end{proof}
The following example is to illustrate Theorem \ref{Necessary-Theorem}.

\begin{example}\label{Example-1}
	{\rm Let  $f:=(f_1, f_2)$, where the functions $f_1, f_2\colon\mathbb{R}^2\to \mathcal{K}_c$ are defined by 
		$$f_1(x)=f_2(x)=[x_1^2+(x_1x_2-1)^2, 2x_1^2+(x_1x_2-1)^2], \ \ \forall x=(x_1, x_2)\in\mathbb{R}^2,$$
		and let $g_t\colon\mathbb{R}^2\to \mathbb{R}$ be given by
		$$g_t(x)=-t|x_1|-t|x_2|, \ \ \forall x=(x_1, x_2)\in\mathbb{R}^2, t\in T:=[0,1].$$
		Consider problem \eqref{problem} with $\Omega=\mathbb{R}^2$. Clearly, the feasible set of \eqref{problem} is  $\mathcal{F}=\mathbb{R}^2$.  
		
		Let $\mathcal{E}=(\mathcal{E}_1, \mathcal{E}_2)$, where $\mathcal{E}_1=\mathcal{E}_2=[1, 2]$. We see that $\bar x=(0, 0)\in \mathcal{E}$-$\mathcal{S}_1^q\eqref{problem}$ and so by Remark \ref{Remark-3.1},  $\bar x\in \mathcal{E}$-$\mathcal{S}_2^{qw}\eqref{problem}$. Indeed, suppose on the contrary that $\bar x\notin \mathcal{E}$-$\mathcal{S}_1^q\eqref{problem}$,then there exists $x\in\mathbb{R}^2$ such that
		$$f_1(x)<_{LU} f_1(\bar x)-\mathcal{E}_1\|x-\bar x\|,$$
		or, equivalently, 
		\begin{equation}\label{equa:1}
		\begin{cases}
		x_1^2+(x_1x_2-1)^2\leq 1-2\sqrt{x_1^2+x_2^2},
		\\
		2x_1^2+(x_1x_2-1)^2\leq 1-\sqrt{x_1^2+x_2^2},
		\end{cases}
		\end{equation}
		with at least one strict inequality. Since \eqref{equa:1}, we have
		\begin{equation*} 
		\begin{cases}
		x_1^2+x_1^2x^2_2+ 2 \sqrt{x_1^2+x_2^2}-2x_1x_2\leq 0
		\\
		2x_1^2+x_1^2x^2_2+  \sqrt{x_1^2+x_2^2}-2x_1x_2\leq 0
		\end{cases}
		\Leftrightarrow
		\begin{cases}
		x_1=0
		\\
		x_2=0.
		\end{cases}
		\end{equation*}
		Hence, there is no strict inequality in \eqref{equa:1}, a contradiction.
		
		Since $N(x; \mathcal{F})=N(x; \Omega)=\partial g_0(x)=\{0\}$, the \eqref{LCQ} is satisfied at every $x\in\mathbb{R}^2$. Hence, by Theorem \ref{Necessary-Theorem}, there exist $\lambda^L, \lambda^U\in \mathbb{R}^2_+$ with $\sum_{i\in I}(\lambda^L_i+\lambda^U_i)=1$, and $\mu\in A(\bar x)$ satisfying condition \eqref{Necessary-condition}.
		
		We note here that the exact solution set of the problem is empty. Indeed, let $x^*$ be an arbitrary point in $\mathbb{R}^2$. Then, $0<f_i^L(x^*)\leq f_i^U(x^*)$ for all $i\in I$. Let $x^k=(\frac{1}{k}, k)$, $k\in\mathbb{N}$.  Then, for all $i\in I$, we have
		\begin{align*}
		&\lim\limits_{k\to\infty}f_i^L(x^k)=\lim\limits_{k\to\infty}\frac{1}{k^2}=0<f_i^L(x^*),
		\\
		&\lim\limits_{k\to\infty}f_i^U(x^k)=\lim\limits_{k\to\infty}\frac{2}{k^2}=0<f_i^U(x^*).
		\end{align*}
		Hence, there exists $k_0\in\mathbb{N}$ such that
		\begin{align*}
		f_i^L(x^k)< f_i^L(x^*),
		\\
		f_i^U(x^k)< f_i^U(x^*),
		\end{align*} 
		for all $k\geq k_0$ and $i\in I$. This means that $x^*$ is not a type-2 weakly Pareto solution of \eqref{problem}, as required.
		\begin{center}
			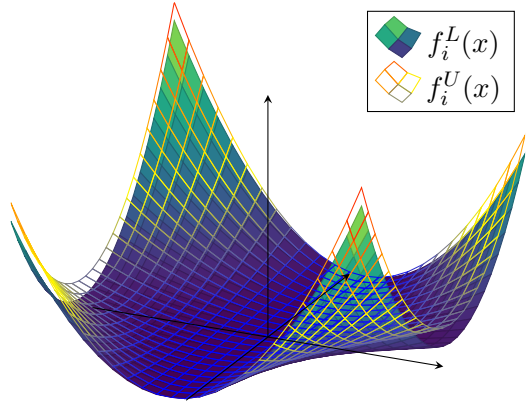
\begin{figure}[htp]
				\begin{center}
					\begin{tikzpicture}[]
					\begin{axis}[axis lines=center,
					axis on top,
					xtick=\empty,
					ytick=\empty,
					ztick=\empty,
					xrange=-3:3,
					yrange=-3:3
					]
					\addplot3[domain=-3:3,y domain=-3:3,colormap/viridis,surf]
					{x^2+(x*y-1)^2};
					\addlegendentry{$f^L_{i}(x)$}
					\addplot3[domain=-3:3,y domain=-3:3,red,mesh]
					{2*x^2+(x*y-1)^2};
					\addlegendentry{$f^U_{i}(x)$}
					\end{axis}
					\end{tikzpicture}
					\caption{Plot of $f^L_i$ and $f^U_i$ in Example \ref{Example-1}.}\label{Fig.2}
				\end{center}
			\end{figure}
		\end{center}
	}
\end{example}

Next we present sufficient conditions for approximate quasi Pareto solutions of \eqref{problem}.  In order to obtain these sufficient conditions, we need to introduce concepts of (strictly) generalized convexity  at a given point for a family of locally Lipschitz functions. The first definition is inspired from  \cite{Chuong-14}, while the second one is motivated from  \cite{Fakhar-19}. 

\begin{definition}
	{\rm 
		\begin{enumerate}[(i)]
			\item  We say that $(f, g_T)$ is {\em generalized  convex on $\Omega$} at $\bar x\in\Omega$ if for any $x\in\Omega$, $z_i^{*L}\in\partial f_i^L(\bar x)$,  $z_i^{*U}\in\partial f_i^U(\bar x)$, $i\in I$, and $x^*_t\in\partial g_t(\bar x)$, $t\in T$, there exists $\nu\in [N(\bar x; \Omega)]^\circ$ satisfying
			\begin{equation}\label{equa:9} 
			\begin{split}
			&f_i^L(x) -f_i^L(\bar x)\geq \langle z_i^{*L}, \nu\rangle, \ \ \forall  i\in I,
			\\
			&f_i^U(x) -f_i^U(\bar x)\geq \langle z_i^{*U}, \nu\rangle, \ \ \forall  i\in I,
			\\
			&g_t(x)- g_t(\bar x) \geq \langle x^*_t, \nu\rangle,\ \ \forall t\in T\\
			&\text{and}\ \ \langle b^*,\nu\rangle\leq \|x-\bar x\|,\ \ \forall b^*\in\mathbb{B}_{\mathbb{R}^n}.
			\end{split}
			\end{equation}
			\item  	  
			We say that $(f, g_T)$ is {\em strictly generalized  convex on $\Omega$} at $\bar x\in\Omega$ if for any $x\in\Omega\setminus\{\bar x\}$, $z_i^{*L}\in\partial f_i^L(\bar x)$,  $z_i^{*U}\in\partial f_i^U(\bar x)$, $i\in I$, and $x^*_t\in\partial g_t(\bar x)$, $t\in T$, there exists $\nu\in [N(\bar x; \Omega)]^\circ$ satisfying
			\begin{equation*} 
			\begin{split}
			&f_i^L(x) -f_i^L(\bar x)> \langle z_i^{*L}, \nu\rangle, \ \ \forall  i\in I,
			\\
			&f_i^U(x) -f_i^U(\bar x)> \langle z_i^{*U}, \nu\rangle, \ \ \forall  i\in I,
			\\
			&g_t(x)- g_t(\bar x) \geq \langle x^*_t, \nu\rangle,\ \ \forall t\in T\\
			&\text{and}\ \ \langle b^*,\nu\rangle\leq \|x-\bar x\|,\ \ \forall b^*\in\mathbb{B}_{\mathbb{R}^n}.
			\end{split}
			\end{equation*}
		\end{enumerate}
	}
\end{definition}
\begin{remark}
	{\rm 
		We see that if $\Omega$ is convex and $f^L_i$, $f^U_i$, $i\in I$, and $g_t$, $t\in T$, are convex (resp. strictly convex), then $(f, g_T)$ is generalized convex (resp. strictly generalized convex) on $\Omega$ at any $\bar x\in \Omega$ with $\nu=x-\bar x$. Moreover, there exist examples that show the class of generalized convex functions is properly larger than the one of convex functions; see, e.g., \cite[Example 3.2]{Chuong-14} and \cite[Example 3.12]{Chuong-Kim-16}.  
	}
\end{remark}
\begin{definition}
	{\rm  
		\begin{enumerate}[(i)]
			\item   We say that $(f, g_T)$ is {\em $\mathcal{E}$-pseudo-quasi generalized  convex on $\Omega$} at $\bar x\in\Omega$ if for any $x\in\Omega$, $z_i^{*L}\in\partial f_i^L(\bar x)$,  $z_i^{*U}\in\partial f_i^U(\bar x)$, $i\in I$, and $x^*_t\in\partial g_t(\bar x)$, $t\in T$, there exists $\nu\in [N(\bar x; \Omega)]^\circ$ satisfying
			\begin{equation}\label{equ:3}
			\begin{split}
			&\langle z_i^{*L}, \nu\rangle +\epsilon_i^U\|x-\bar x\|\geq 0 \Rightarrow f_i^L(x) \geq f_i^L(\bar x)-\epsilon_i^U\|x-\bar x\|, \ \ \forall  i\in I,
			\\
			&\langle z_i^{*U}, \nu\rangle +\epsilon_i^L\|x-\bar x\|\geq 0 \Rightarrow f_i^U(x) \geq f_i^U(\bar x)-\epsilon_i^L\|x-\bar x\|, \ \  \forall i\in I,
			\\
			&g_t(x)\leq g_t(\bar x) \Rightarrow \langle x^*_t, \nu\rangle\leq 0,\ \ \forall t\in T\\
			&\text{and}\ \ \langle b^*,\nu\rangle\leq \|x-\bar x\|,\ \ \forall b^*\in\mathbb{B}_{\mathbb{R}^n}.
			\end{split}
			\end{equation}
			\item  	  
			We say that $(f, g_T)$ is {\em strictly $\mathcal{E}$-pseudo-quasi generalized  convex on $\Omega$} at $\bar x\in\Omega$ if for any $x\in\Omega\setminus\{\bar x\}$, $z_i^{*L}\in\partial f_i^L(\bar x)$,  $z_i^{*U}\in\partial f_i^U(\bar x)$, $i\in I$, and $x^*_t\in\partial g_t(\bar x)$, $t\in T$, there exists $\nu\in [N(\bar x; \Omega)]^\circ$ satisfying
			\begin{equation}\label{equa:8}
			\begin{split}
			&\langle z_i^{*L}, \nu\rangle +\epsilon_i^U\|x-\bar x\|\geq 0 \Rightarrow f_i^L(x) > f_i^L(\bar x)-\epsilon_i^U\|x-\bar x\|, \ \ \forall  i\in I, 
			\\
			&\langle z_i^{*U}, \nu\rangle +\epsilon_i^L\|x-\bar x\|\geq 0 \Rightarrow f_i^U(x) > f_i^U(\bar x)-\epsilon_i^L\|x-\bar x\|,\ \  \forall i\in I, 
			\\
			&g_t(x)\leq g_t(\bar x) \Rightarrow \langle x^*_t, \nu\rangle\leq 0, \ \ \forall t\in T, 
			\\
			&\text{and}\ \ \langle b^*,\nu\rangle\leq \|x-\bar x\|,\ \ \forall b^*\in\mathbb{B}_{\mathbb{R}^n}. 
			\end{split}
			\end{equation}
		\end{enumerate}
	}
\end{definition}

\begin{remark}\label{Remark-3}
	{\rm By definition, it is easy to see that if  $(f, g_T)$ is  (strictly) generalized  convex on $\Omega$ at $\bar x\in\Omega$, then for any $\mathcal{E}=(\mathcal{E}_1, \ldots, \mathcal{E}_m)$, where $\mathcal{E}_i=[\epsilon_i^L, \epsilon_i^u]$, $0\leq\epsilon_i^L\leq \epsilon_i^U$, $i\in I$,   $(f, g_T)$ is (strictly) $\mathcal{E}$-pseudo-quasi generalized  convex on $\Omega$ at $\bar x\in\Omega$. Furthermore, the class of  (strictly) $\mathcal{E}$-pseudo-quasi generalized  convex functions is  properly wider than the one of (strictly) generalized convex functions. To see this, let us consider the following example.
	}
\end{remark}

\begin{example}\label{Example-2}
	{\rm  Let $m=1$, $f\colon \mathbb{R}\to\mathcal{K}_c$ be defined by $f(x)=[f^L(x), f^U(x)]$, where
		\begin{equation*}
		f^L(x)=
		\begin{cases}
		\dfrac{x}{2},  & x\geq 0,
		\smallskip
		\\
		\dfrac{2x}{3},  & x<0,
		\end{cases}
		\ \ \ \ 
		f^U(x)=
		\begin{cases}
		\dfrac{2x}{3},  & x\geq 0,
		\smallskip
		\\
		\dfrac{x}{2},  & x<0,
		\end{cases}
		\end{equation*}	
		and $g_t(x)=-tx$ for all $x\in\mathbb{R}$ and $t\in T:=[0, 1]$.	 Take $\Omega=\mathbb{R}$ and $\mathcal{E}=[\frac{2}{3}, \frac{3}{4}]$. We show that $(f, g_T)$ is strictly $\mathcal{E}$-pseudo-quasi generalized  convex at $\bar x=0$ but not  generalized  convex at this point. Indeed, it is easy to see that
		\begin{equation*}
		f^L(x)>f^L(\bar x)-\frac{3}{4}|x|, f^U(x)>f^U(\bar x)-\frac{2}{3}|x|, \ \ \forall x\in\mathbb{R}\setminus\{0\}.
		\end{equation*}
		Hence, by letting $\nu=0$, we see that all conditions in \eqref{equa:8} are satisfied for all  $z^{*L}\in\partial f^L(\bar x)$,  $z^{*U}\in\partial f^U(\bar x)$, $x^*_t\in\partial g_t(\bar x)$, $t\in T$, and $b^*\in\mathbb{B}_{\mathbb{R}^n}$.
		
		We now show that $(f, g_T)$ is  not generalized  convex at $\bar x$ and so it is not strictly generalized  convex at this point. We have $\partial f^L(\bar x)=\partial f^U(\bar x)=\{\frac{1}{2}, \frac{2}{3}\}$ and $\partial g_t(\bar x)=\{-t\}$, $\forall t\in T$. Let $x=1$, $z^{*L}=z^{*U}=\frac{2}{3}$, $x^{*}_t=-t$, and assume that there exists $\nu\in\mathbb{R}$ satisfying  \eqref{equa:9}. Then
		\begin{align*}
		\frac{1}{2}&\geq \frac{2\nu}{3},
		\\
		\frac{2}{3}&\geq \frac{2\nu}{3},
		\\
		-t&\geq -t\nu, \ \ \forall t\in [0, 1].
		\end{align*}    
		Hence, $1 \leq \nu\leq \frac{3}{4},$ a contradiction.   
		\begin{center}
			\begin{figure}[htp]
				\begin{center}
					\begin{tikzpicture}[]
					\begin{axis}[axis lines=center,
					axis on top,
					xtick=\empty,
					ytick=\empty,
					xrange=-5:5,
					yrange=-5:5
					]
					\addplot[domain=0:5,y domain=-5:5,red]
					{x/2};
					\addlegendentry{$f^L(x)$}	
					\addplot[domain=-5:0,y domain=-5:5,blue]
					{x/2};
					\addlegendentry{$f^U(x)$}		
					\addplot[domain=-5:0,y domain=-5:5,red]
					{2*x/3};
					\addplot[domain=0:5,y domain=-5:5,blue]
					{2*x/3};			
					\end{axis}
					\end{tikzpicture}
					\caption{Plot of $f^L$ and $f^U$ in Example \ref{Example-2}.}\label{Fig.1}
				\end{center}
			\end{figure}
		\end{center}
	}
\end{example}

\begin{theorem}\label{Sufficient-Theorem} Let $\bar x\in\mathcal{F}$ and assume that there exist $\lambda^L, \lambda^U\in \mathbb{R}^m_+$ with $\sum_{i\in I}(\lambda_i^L+\lambda_i^U)=1$, and $\mu\in A(\bar x)$ satisfying \eqref{Necessary-condition}. 
	\begin{enumerate}[\rm(i)]
		\item If $(f, g_T)$ is $\mathcal{E}$-pseudo-quasi generalized convex on $\Omega$ at $\bar x$, then $\bar x\in \mathcal{E}$-$\mathcal{S}_2^{qw}\eqref{problem}$.
		
		\item If $(f, g_T)$ is strictly $\mathcal{E}$-pseudo-quasi generalized convex on $\Omega$ at $\bar x$, then $\bar x\in \mathcal{E}$-$\mathcal{S}_1^{q}\eqref{problem}$ and we therefore get $\bar x\in \mathcal{E}$-$\mathcal{S}_2^{q}\eqref{problem}$ and $\bar x\in \mathcal{E}$-$\mathcal{S}_1^{qw}\eqref{problem}$.  
	\end{enumerate}
\end{theorem}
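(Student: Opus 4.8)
The plan is to argue by contradiction in each part, extracting from the asserted failure of approximate optimality a feasible point $x$ that violates the generalized convexity hypothesis via the multiplier relation \eqref{Necessary-condition}. Concretely, from \eqref{Necessary-condition} pick elements $z_i^{*L}\in\partial f_i^L(\bar x)$, $z_i^{*U}\in\partial f_i^U(\bar x)$, $x_t^*\in\partial g_t(\bar x)$ ($t\in T(\mu)$), $b^*\in\mathbb{B}_{\mathbb{R}^n}$ and $w^*\in N(\bar x;\Omega)$ such that
\begin{equation*}
\sum_{i\in I}\bigl[\lambda_i^L z_i^{*L}+\lambda_i^U z_i^{*U}\bigr]+\sum_{t\in T}\mu_t x_t^*+\Bigl(\sum_{i\in I}(\lambda_i^L\epsilon_i^U+\lambda_i^U\epsilon_i^L)\Bigr)b^*+w^*=0.
\end{equation*}
For these particular subgradients, the generalized convexity at $\bar x$ supplies a single vector $\nu\in[N(\bar x;\Omega)]^\circ$ obeying the implication system \eqref{equ:3} (resp.\ \eqref{equa:8}). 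Pairing the displayed identity with $\nu$ and using $\langle w^*,\nu\rangle\le 0$ and $\langle b^*,\nu\rangle\le\|x-\bar x\|$ yields
\begin{equation*}
\sum_{i\in I}\bigl[\lambda_i^L\langle z_i^{*L},\nu\rangle+\lambda_i^U\langle z_i^{*U},\nu\rangle\bigr]+\sum_{t\in T}\mu_t\langle x_t^*,\nu\rangle+\Bigl(\sum_{i\in I}(\lambda_i^L\epsilon_i^U+\lambda_i^U\epsilon_i^L)\Bigr)\|x-\bar x\|\ge 0.
\end{equation*}

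For part (i), suppose $\bar x\notin\mathcal{E}\text{-}\mathcal{S}_2^{qw}\eqref{problem}$; then there is $x\in\mathcal{F}$ with $f_i^L(x)<f_i^L(\bar x)-\epsilon_i^U\|x-\bar x\|$ and $f_i^U(x)<f_i^U(\bar x)-\epsilon_i^L\|x-\bar x\|$ for all $i\in I$. Apply the generalized convexity at $\bar x$ with this $x$. The contrapositives of the first two implication blocks in \eqref{equ:3} force $\langle z_i^{*L},\nu\rangle+\epsilon_i^U\|x-\bar x\|<0$ and $\langle z_i^{*U},\nu\rangle+\epsilon_i^L\|x-\bar x\|<0$ for every $i$; since $x\in\mathcal{F}$ we have $g_t(x)\le 0=g_t(\bar x)$ for $t\in T(\mu)$ (using $\mu_t g_t(\bar x)=0$), so the third block gives $\langle x_t^*,\nu\rangle\le 0$. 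Multiplying by $\lambda_i^L,\lambda_i^U\ge 0$ and $\mu_t\ge 0$, summing, and recalling $\sum_{i\in I}(\lambda_i^L+\lambda_i^U)=1>0$, we obtain strict inequality
\begin{equation*}
\sum_{i\in I}\bigl[\lambda_i^L\langle z_i^{*L},\nu\rangle+\lambda_i^U\langle z_i^{*U},\nu\rangle\bigr]+\sum_{t\in T}\mu_t\langle x_t^*,\nu\rangle+\Bigl(\sum_{i\in I}(\lambda_i^L\epsilon_i^U+\lambda_i^U\epsilon_i^L)\Bigr)\|x-\bar x\|<0,
\end{equation*}
contradicting the inequality derived above. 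For part (ii), suppose $\bar x\notin\mathcal{E}\text{-}\mathcal{S}_1^{q}\eqref{problem}$; then there is $x\in\mathcal{F}$ with $f_i(x)\le_{LU}f_i(\bar x)-\mathcal{E}_i\|x-\bar x\|$ for all $i$ and strict $<_{LU}$ for some $k$. In terms of endpoints this means $f_i^L(x)\le f_i^L(\bar x)-\epsilon_i^U\|x-\bar x\|$ and $f_i^U(x)\le f_i^U(\bar x)-\epsilon_i^L\|x-\bar x\|$ for all $i$, with at least one of these inequalities strict for $i=k$. Necessarily $x\ne\bar x$ (otherwise all endpoint inequalities would be equalities and the $<_{LU}$ relation would fail), so the strict generalized convexity at $\bar x$ applies. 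Its first two blocks, taken contrapositively, give $\langle z_i^{*L},\nu\rangle+\epsilon_i^U\|x-\bar x\|<0$ and $\langle z_i^{*U},\nu\rangle+\epsilon_i^L\|x-\bar x\|<0$ for every $i$ (since the strict conclusions $f_i^L(x)>\cdots$ etc.\ are already contradicted by the non-strict endpoint inequalities); again $\langle x_t^*,\nu\rangle\le 0$ for $t\in T(\mu)$, and the same weighted summation produces the strict inequality contradicting the non-strict one obtained from \eqref{Necessary-condition}. The final sentence of (ii), that $\bar x\in\mathcal{E}\text{-}\mathcal{S}_2^{q}\eqref{problem}\cap\mathcal{E}\text{-}\mathcal{S}_1^{qw}\eqref{problem}$, is then immediate from Remark \ref{Remark-3.1}(i)--(ii).

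The main obstacle I anticipate is bookkeeping with the implication-form convexity rather than a genuine analytic difficulty: one must be careful that the vector $\nu$ furnished by \eqref{equ:3}/\eqref{equa:8} is the same $\nu$ used throughout, and that it is supplied for the exact subgradients $z_i^{*L},z_i^{*U},x_t^*$ appearing in the chosen representation of $0$ in \eqref{Necessary-condition} — this is why the definitions quantify "for any $x\in\Omega$" after fixing arbitrary subgradients. A second point requiring care is the reduction of the $LU$-order conditions to endpoint inequalities and, in part (ii), the deduction $x\ne\bar x$, which is what licenses invoking the strict variant (valid only on $\Omega\setminus\{\bar x\}$). Finally one should only keep the finitely many terms $t\in T(\mu)$ in the sums over $T$, which is legitimate since $\mu\in A(\bar x)$ has finite support and $\mu_t g_t(\bar x)=0$ makes the constraint values at $\bar x$ vanish on that support.
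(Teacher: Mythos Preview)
Your proposal is correct and follows essentially the same argument as the paper's proof: extract subgradients from \eqref{Necessary-condition}, invoke the (strict) $\mathcal{E}$-pseudo-quasi generalized convexity for those specific subgradients and the contradicting point $x$, use the contrapositives of the first two implication blocks together with $g_t(x)\le 0=g_t(\bar x)$ on $T(\mu)$, and derive $0\le\langle -w^*,\nu\rangle<0$. Your care about $x\neq\bar x$ in part~(ii) and about the single $\nu$ being tied to the chosen subgradients matches exactly what the paper does.
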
	
\begin{proof} Since $\bar x$ satisfies \eqref{Necessary-condition} with respect to $(\lambda^L, \Lambda^U, \mu)$, there exist $z_i^{*L}\in \partial f_i^L(\bar x)$, $z_i^{*U}\in \partial f_i^U(\bar x)$, $i\in I$, $x_t^*\in \partial g_t(\bar x)$, $t\in T$, $b^*\in \mathbb{B}_{\mathbb{R}^n}$, and $\omega^*\in N(\bar x; \Omega)$ such that
	\begin{equation*}
	\sum_{i\in I}[\lambda_i^L z_i^{*L}+ \lambda_i^U z_i^{*U}]+\sum_{t\in T}\mu_t x^*_t+\sum_{i\in I}(\lambda_i^L\epsilon_i^U+\lambda_i^U\epsilon_i^L)b^*+\omega^*=0,
	\end{equation*}	
	or, equivalently,
	\begin{equation}\label{equ:5}
	\sum_{i\in I}[\lambda_i^L z_i^{*L}+ \lambda_i^U z_i^{*U}]+\sum_{t\in T}\mu_t x^*_t+\sum_{i\in I}(\lambda_i^L\epsilon_i^U+\lambda_i^U\epsilon_i^L)b^*=-\omega^*.
	\end{equation}
	We first justify (i). Suppose on the contrary that $\bar x$ is not a type-2 $\mathcal{E}$-quasi-weakly Pareto solution of \eqref{problem}, then there exists $x\in\mathcal{F}$ such that $$f_i(x)<^s_{LU} f_i(\bar x)-\mathcal{E}_i\|x-\bar x\|,\ \  \forall i\in I,$$
	or, equivalently, 
	\begin{equation}\label{equ:4}
	f_i^L(x)< f_i^L(\bar x)-\epsilon_i^U\|x-\bar x\| \ \ \text{and}\ \ f_i^U(x)< f_i^U(\bar x)-\epsilon_i^L\|x-\bar x\|, \ \ \forall i\in I.  
	\end{equation}	 
	By the $\mathcal{E}$-pseudo-quasi generalized convexity of $(f, g_T)$, there is $\nu\in [N(\bar x; \Omega)]^\circ$ satisfying \eqref{equ:3}. Thus, we deduce from \eqref{equ:3} and \eqref{equ:4} that
	\begin{equation*}
	\langle z_i^{*L}, \nu\rangle +\epsilon_i^U\|x-\bar x\|< 0\ \ \text{and}\ \ \langle z_i^{*U}, \nu\rangle +\epsilon_i^L\|x-\bar x\|< 0.
	\end{equation*}   
	For each $t\in T(\mu)$, we have $g_t(\bar x)=0$. Hence, 
	$g_t(x)\leq g_t(\bar x)$ for all $t\in T(\mu)$. This, together with \eqref{equ:3}, gives 
	\begin{equation*}
	\langle x^*_t, \nu\rangle\leq 0\ \ \text{for all}\ \ t\in T(\mu).
	\end{equation*}
	Since  $\nu\in [N(\bar x; \Omega)]^\circ$, \eqref{equ:3}, and \eqref{equ:5}, we obtain
	\begin{align*}
	0&\leq \langle -\omega^*,\nu\rangle=\sum_{i\in I}[\lambda_i^L \langle z_i^{*L}, \nu\rangle+ \lambda_i^U \langle z_i^{*U}, \nu\rangle]+\sum_{t\in T}\mu_t\langle x^*_t, \nu\rangle+\sum_{i\in I}(\lambda_i^L\epsilon_i^U+\lambda_i^U\epsilon_i^L)\langle b^*,\nu\rangle 
	\\
	&=\sum_{i\in I}[\lambda_i^L \langle z_i^{*L}, \nu\rangle+ \lambda_i^U \langle z_i^{*U}, \nu\rangle]+\sum_{t\in T(\mu)}\mu_t\langle x^*_t, \nu\rangle+\sum_{i\in I}(\lambda_i^L\epsilon_i^U+\lambda_i^U\epsilon_i^L)\langle b^*,\nu\rangle
	\\
	&<-\sum_{i\in I}[\lambda_i^L\epsilon_i^U\|x-\bar x\|+\lambda_i^U\epsilon_i^L\|x-\bar x\|]+\sum_{i\in I}(\lambda_i^L\epsilon_i^U+\lambda_i^U\epsilon_i^L)\|x-\bar x\|=0,
	\end{align*} 
	a contradiction. The proof of (i) is completed.  
	
	We now prove (ii). Suppose on the contrary that $\bar x$ is not a type-1 $\mathcal{E}$-quasi  Pareto solution of \eqref{problem}, then there exists $x\in\mathcal{F}$ such that 
	$$f_i(x)\leq_{LU} f_i(\bar x)-\mathcal{E}_i\|x-\bar x\|,\ \ \forall i\in I,$$   	 
	where at least one of the inequalities is strict. This implies that $x\neq \bar x$. Therefore, by the strictly $\mathcal{E}$-pseudo-quasi generalized convexity of $(f, g_T)$ at $\bar x$, \eqref{equ:5}, and by the same argument as in the proof of  part (i), we can deduce the contradiction. Hence, $\bar x$ is  a type-1 $\mathcal{E}$-quasi  Pareto solution of \eqref{problem}.  The proof is completed. 
\end{proof} 

\begin{remark}{\rm 
		\begin{enumerate}[(i)]
			\item By Remark \ref{Remark-3} and \cite[Example 3.2]{Son-Tuyen-Wen-19},  the condition \eqref{Necessary-Theorem} alone is not sufficient to 	guarantee that $\bar x$ is a $\mathcal{E}$-quasi (-weakly) Pareto solution of \eqref{problem} if the (strict) $\mathcal{E}$-pseudo generalized convexity of 	$(f, g_T)$ on $\Omega$ at $\bar x$ is violated.
			
			\item If $(f, g_T)$ is $\mathcal{E}$-pseudo-quasi generalized convex on $\Omega$ at $\bar x\in \mathcal{F}$ and there exist $\lambda^L, \lambda^U\in\mathbb{R}^m_+$ with $\lambda_i^L>0, \lambda_i^U>0$, $\forall i\in I$, $\sum_{i\in I}(\lambda_i^L+\lambda_i^U)=1$, and $\mu\in A(\bar x)$ satisfying \eqref{Necessary-condition}, then $\bar x\in \mathcal{E}$-$\mathcal{S}_1^{q}\eqref{problem}$.
			
			\item  Since the class of (strictly) $\mathcal{E}$-pseudo-quasi generalized convex functions is properly wider than the class of (strictly) generalized convex functions, our results in Theorem \ref{Necessary-Theorem} generalize and improve the corresponding results in \cite{Chuong-Kim-16,Jiao-Liguo-21,Son-Tuyen-Wen-19,Tuyen-2021}.  To see this, let us consider the following simple example.
		\end{enumerate}
	}
\end{remark}

\begin{example} {\rm 
		Let $m=1$, $f\colon \mathbb{R}\to\mathcal{K}_c$ be defined by $f(x)=[f^L(x), f^U(x)]$, where
		\begin{equation*}
		f^L(x)=f^U(x)=
		\begin{cases}
		\dfrac{x}{2},  & x\geq 0,
		\smallskip
		\\
		\dfrac{2x}{3},  & x<0.
		\end{cases}
		\end{equation*}	
		Consider problem \eqref{problem} with $g_t(x)=-tx$ for all $x\in\mathbb{R}$ and $t\in T:=[0, 1]$, $\Omega=\mathbb{R}$ and $\mathcal{E}=[\frac{2}{3}, \frac{2}{3}]$. Actually, in this case problem \eqref{problem} is a  semi-infinite programming problem.  Analysis similar to that in Example  \ref{Example-2} shows that $(f, g_T)$ is strictly $\mathcal{E}$-pseudo-quasi convex at $\bar x=0$ but not generalized convex at this point. It is easy to see that $\bar x$ satisfies condition \eqref{Necessary-condition}; e.g., $\lambda^L=\lambda^U=\frac{1}{2}$, $\mu_{\frac{1}{2}}=1$,  and $\mu_t=0$ for all $t\in T\setminus\{\frac{1}{2}\}$. Hence, by Theorem \ref{Sufficient-Theorem}, $\bar x\in \mathcal{E}$-$\mathcal{S}_1^{q}\eqref{problem}$. However, since  $(f, g_T)$ is not  generalized convex at $\bar x$, \cite[Theorem 3.13]{Chuong-Kim-16}, \cite[Theorem 3.2]{Jiao-Liguo-21}, \cite[Theorem 3.3]{Son-Tuyen-Wen-19}, and \cite[Theorem 6]{Tuyen-2021} cannot be applied for this example.   
	} 
\end{example} 
\section{Approximate duality theorems}\label{Duality-Relations}
Let $\mathcal{A}:=(A_1, \ldots, A_m)$ and $\mathcal{B}:=(B_1, \ldots, B_m)$, where $A_i$, $B_i$, $i\in I$, are intervals in $\mathcal{K}_c$. In what follows,   we use the following notations for convenience.
\begin{align*}
\mathcal{A}&\preceq_{LU}\mathcal{B} \Leftrightarrow
\begin{cases}
A_i\leq_{LU} B_i, \ \ \forall i\in I,
\\
A_k<_{LU} B_k,  \ \ \text{for at least one}\ \ k\in I. 
\end{cases}
\\
\mathcal{A}&\npreceq_{LU}\mathcal{B} \ \ \text{is the negation of}\ \ \mathcal{A}\preceq_{LU}.
\\
\mathcal{A}&\prec^s_{LU}\mathcal{B}  \Leftrightarrow A_i<^s_{LU} B_i, \ \ \forall i\in I.
\\
\mathcal{A}&\nprec^s_{LU}\mathcal{B} \ \ \text{is the negation of}\ \ \mathcal{A}\prec^s_{LU}\mathcal{B}. 
\end{align*}

For $y\in\mathbb{R}^n$, $(\lambda^L, \lambda^U)\in\mathbb{R}^m_+\times\mathbb{R}^m_+\setminus\{(0,0)\}$, and $\mu\in \mathbb{R}^{|T|}_+$, put
\begin{align*}
\mathcal{L}(y, \lambda^L,\lambda^U, \mu):=f(y)=\left([f_1^L(y), f_1^U(y)], \ldots, [f_m^L(y), f_m^U(y)]\right).
\end{align*} 

In connection with the primal problem \eqref{problem}, we consider the following dual problem in the sense of Mond--Weir (stated in an approximate form):
\begin{align}
\label{Dual-problem}\tag{SIVD$_{MW}$} 
&\max_{y\in\Omega}\ \  \mathcal{L}(y, \lambda^L,\lambda^U, \mu) 
\\
&\text{s. t.}\ \ (y, \lambda^L,\lambda^U, \mu)\in\mathcal{F}_{MW},\notag
\end{align}
where the feasible set is defined by
\begin{align*}
&\mathcal{F}_{MW}:=\big\{(y, \lambda^L, \lambda^U, \mu)\in \Omega\times\mathbb{R}^m_+ \times\mathbb{R}^m_+\times\mathbb{R}^{|T|}_+\,:\, 0\in \sum_{i\in I}[\lambda_i^L\partial f_i^L(y)+\lambda_i^U\partial f_i^U(y)]+
\\
&\sum_{t\in T}\mu_t\partial g_t(y)+\sum_{i\in I}(\lambda_i^L\epsilon_i^U+\lambda_i^U\epsilon_i^L)\mathbb{B}_{\mathbb{R}^n}+N(y; \Omega), \mu_tg_t(y)\geq 0, t\in T,  \sum_{i\in I}(\lambda_i^L+\lambda_i^U)=1\big\}.
\end{align*} 
It should be noticed that approximate  quasi Pareto solutions of the dual problem \eqref{Dual-problem}  are defined similarly as in Definition \ref{Defi-solution} by replacing $\leq_{LU}$, $<_{LU}$, and $<^s_{LU}$ by  $\geq_{LU}$, $>_{LU}$, and $>^s_{LU}$, respectively. For example, we say that $(\bar y, \bar  \lambda^L,\bar \lambda^U, \bar \mu)\in\mathcal{F}_{MW}$ is a {\em type-2 $\mathcal{E}$-quasi weakly Pareto solution} of \eqref{Dual-problem} if there is no $(y, \lambda^L, \lambda^U, \mu)\in \mathcal{F}_{MW}$ such that    
\begin{equation*}
\mathcal{L}_i(y, \lambda^L,\lambda^U, \mu)-\mathcal{E}_i\|y-\bar y\|>^s_{LU} \mathcal{L}_i(\bar y, \bar \lambda^L,\bar\lambda^U, \bar\mu),\ \ \forall i\in I,
\end{equation*}
or, equivalently,
\begin{equation*}
\mathcal{L}(\bar y, \bar \lambda^L,\bar\lambda^U, \bar\mu)\nprec^s_{LU}\mathcal{L}_i(y, \lambda^L,\lambda^U, \mu)-\mathcal{E}_i\|y-\bar y\|,\ \ \forall (y, \lambda^L, \lambda^U, \mu)\in \mathcal{F}_{MW}.
\end{equation*}

The following theorem describes weak duality relations for approximate quasi Pareto solutions between the primal problem \eqref{problem} and the dual problem \eqref{Dual-problem}.
\begin{theorem}[$\mathcal{E}$-weak duality]\label{weak-dual} Let $x\in \mathcal{F}$ and $(y, \lambda^L, \lambda^U, \mu)\in\mathcal{F}_{MW}$.
	\begin{enumerate}[\rm(i)]
		\item If $(f, g_T)$ is $\mathcal{E}$-pseudo-quasi generalized convex on $\Omega$ at $y$, then
		\begin{equation*}
		f(x) \nprec^s_{LU} \mathcal{L}(y, \lambda^L, \lambda^U, \mu) -\mathcal{E}\|x-y\|.
		\end{equation*}
		\item If $(f, g_T)$ is strictly $\mathcal{E}$-pseudo-quasi generalized convex on $\Omega$ at $y$, then
		\begin{equation*}
		f(x)\npreceq_{LU} \mathcal{L}(y, \lambda^L, \lambda^U, \mu)-\mathcal{E}\|x-y\|.
		\end{equation*}
	\end{enumerate}
\end{theorem}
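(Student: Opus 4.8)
The plan is to run the contradiction argument from the proof of Theorem~\ref{Sufficient-Theorem}, now with the dual feasible point $y$ playing the role that $\bar x$ played there. First I would unpack dual feasibility: since $(y,\lambda^L,\lambda^U,\mu)\in\mathcal F_{MW}$, the defining inclusion $0\in\sum_{i\in I}[\lambda_i^L\partial f_i^L(y)+\lambda_i^U\partial f_i^U(y)]+\sum_{t\in T}\mu_t\partial g_t(y)+\sum_{i\in I}(\lambda_i^L\epsilon_i^U+\lambda_i^U\epsilon_i^L)\mathbb B_{\mathbb R^n}+N(y;\Omega)$ produces elements $z_i^{*L}\in\partial f_i^L(y)$, $z_i^{*U}\in\partial f_i^U(y)$ ($i\in I$), $x_t^*\in\partial g_t(y)$ ($t\in T$), $b^*\in\mathbb B_{\mathbb R^n}$ and $\omega^*\in N(y;\Omega)$ with
\[
\sum_{i\in I}[\lambda_i^L z_i^{*L}+\lambda_i^U z_i^{*U}]+\sum_{t\in T}\mu_t x_t^*+\sum_{i\in I}(\lambda_i^L\epsilon_i^U+\lambda_i^U\epsilon_i^L)b^*=-\omega^*,
\]
while $\mu_tg_t(y)\ge 0$ for all $t\in T$ and $\sum_{i\in I}(\lambda_i^L+\lambda_i^U)=1$; recall also $\mathcal L(y,\lambda^L,\lambda^U,\mu)=f(y)$.

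For part (i), I argue by contradiction: assume $f(x)\prec^s_{LU}f(y)-\mathcal E\|x-y\|$, i.e.\ $f_i^L(x)<f_i^L(y)-\epsilon_i^U\|x-y\|$ and $f_i^U(x)<f_i^U(y)-\epsilon_i^L\|x-y\|$ for every $i\in I$. By $\mathcal E$-pseudo-quasi generalized convexity of $(f,g_T)$ at $y$, pick $\nu\in[N(y;\Omega)]^\circ$ satisfying~\eqref{equ:3} for this $x$ and the subgradients fixed above; the contrapositives of the first two lines of~\eqref{equ:3} give $\langle z_i^{*L},\nu\rangle+\epsilon_i^U\|x-y\|<0$ and $\langle z_i^{*U},\nu\rangle+\epsilon_i^L\|x-y\|<0$ for all $i$. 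For $t\in T(\mu)$ we have $\mu_t>0$, hence $g_t(y)\ge 0\ge g_t(x)$ since $x\in\mathcal F$, so $g_t(x)\le g_t(y)$ and the third line of~\eqref{equ:3} yields $\langle x_t^*,\nu\rangle\le 0$; also $\langle b^*,\nu\rangle\le\|x-y\|$ by the last line of~\eqref{equ:3}. Pairing the displayed equation with $\nu$, using $\langle-\omega^*,\nu\rangle\ge 0$, discarding the nonpositive term $\sum_{t\in T(\mu)}\mu_t\langle x_t^*,\nu\rangle$, and bounding $\langle b^*,\nu\rangle\le\|x-y\|$ gives $0\le\sum_{i\in I}[\lambda_i^L\langle z_i^{*L},\nu\rangle+\lambda_i^U\langle z_i^{*U},\nu\rangle]+\sum_{i\in I}(\lambda_i^L\epsilon_i^U+\lambda_i^U\epsilon_i^L)\|x-y\|$; multiplying the strict estimates by $\lambda_i^L,\lambda_i^U\ge 0$ and summing (at least one multiplier is positive because $\sum_{i\in I}(\lambda_i^L+\lambda_i^U)=1$) forces the right-hand side to be $<0$, a contradiction.

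For part (ii), I would instead assume $f(x)\preceq_{LU}f(y)-\mathcal E\|x-y\|$, i.e.\ $f_i^L(x)\le f_i^L(y)-\epsilon_i^U\|x-y\|$ and $f_i^U(x)\le f_i^U(y)-\epsilon_i^L\|x-y\|$ for all $i$ with at least one strict inequality, invoke strict $\mathcal E$-pseudo-quasi generalized convexity to get $\nu\in[N(y;\Omega)]^\circ$ satisfying~\eqref{equa:8}, and note that the contrapositives of its first two lines still deliver the strict inequalities $\langle z_i^{*L},\nu\rangle+\epsilon_i^U\|x-y\|<0$ and $\langle z_i^{*U},\nu\rangle+\epsilon_i^L\|x-y\|<0$; the rest of the chain (the $T(\mu)$ reduction, the polar-cone pairing, the $0<0$ contradiction) is then verbatim as in (i). There is no real obstacle here beyond careful sign bookkeeping for the ball and normal-cone terms; the one point that genuinely needs attention is the observation, in (ii), that the presence of a strict component in $f(x)\preceq_{LU}\cdots$ forces $x\neq y$, which is exactly what permits invoking the strict convexity hypothesis, whose defining inequalities are only quantified over $\Omega\setminus\{\bar x\}$.
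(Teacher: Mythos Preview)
Your proposal is correct and follows essentially the same route as the paper's own proof: unpack dual feasibility to extract subgradients and the normal vector $\omega^*$, assume the negation of the conclusion, use the (contrapositive of the) $\mathcal E$-pseudo-quasi generalized convexity inequalities at $y$ together with $g_t(x)\le 0\le g_t(y)$ for $t\in T(\mu)$, and pair the resulting identity with $\nu\in[N(y;\Omega)]^\circ$ to reach $0\le\langle-\omega^*,\nu\rangle<0$. Your explicit remark that $\sum_{i\in I}(\lambda_i^L+\lambda_i^U)=1$ guarantees at least one positive multiplier (so the summed inequality stays strict), and your observation in (ii) that the strict component of $\preceq_{LU}$ forces $x\neq y$, are exactly the points the paper uses as well.
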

\begin{proof} Since $x\in\mathcal{F}$ and $(y, \lambda^L, \lambda^U, \mu)\in\mathcal{F}_{MW}$,  we have $x, y\in\Omega$,  
	\begin{equation}\label{equ:7}
	g_t(x)\leq 0, \ \ \mu_tg_t(y)\geq 0, \ \ \forall t\in T, 
	\end{equation}
	and 
	\begin{equation*}
	0\in \sum_{i\in I}[\lambda_i^L\partial f_i^L(y)+\lambda_i^U\partial f_i^U(y)]+\sum_{t\in T}\mu_t\partial g_t(y)+\sum_{i\in I}(\lambda_i^L\epsilon_i^U+\lambda_i^U\epsilon_i^L)\mathbb{B}_{\mathbb{R}^n}+N(y; \Omega).
	\end{equation*}
	Hence, there exist $z_i^{*L}\in \partial f_i^L(y)$, $z_i^{*U}\in \partial f_i^U(y)$, $i\in I$, $x^*_t\in\partial g_t(y)$, $t\in T$, $b^*\in\mathbb{B}_{\mathbb{R}^n}$, and $\omega^*\in N(y; \Omega)$ such that
	\begin{equation}\label{equ:6}
	\sum_{i\in I}[\lambda_i^L z_i^{*L}+ \lambda_i^U z_i^{*U}]+\sum_{t\in T}\mu_t x^*_t+\sum_{i\in I}(\lambda_i^L\epsilon_i^U+\lambda_i^U\epsilon_i^L)b^*=-\omega^*.
	\end{equation}	
	
	We first justify (i). Assume to the contrary that
	\begin{equation*}
	f(x)\prec^s_{LU} \mathcal{L}(y, \lambda^L, \lambda^U, \mu)-\mathcal{E}\|x-y\|.
	\end{equation*}
	This means that
	\begin{equation*}
	f_i(x)\prec^s_{LU} \mathcal{L}_i(y, \lambda^L, \lambda^U, \mu) -\mathcal{E}_i\|x-y\|, \ \ \forall i\in I,
	\end{equation*}
	or, equivalently,
	\begin{align*}
	\begin{cases}
	f_i^L(x)<f_i^L(y)-\epsilon_i^U\|x-y\|,
	\\
	f_i^U(x)<f_i^U(y)-\epsilon_i^L\|x-y\|, 
	\end{cases}
	\end{align*}
	for all $i\in I$. By \eqref{equ:7} and $\mu_t\geq 0$ for all $t\in T$, we have
	\begin{equation*}
	g_t(x)\leq 0\leq g_t(y)
	\end{equation*}
	for all $t\in T(\mu)$. Since $(f, g_T)$ is  $\mathcal{E}$-pseudo-quasi generalized convex  on $\Omega$ at $y$, there exists $\nu\in [N(y, \Omega)]^\circ$ such that
	\begin{align*}
	&\langle z_i^{*L}, \nu\rangle +\epsilon_i^U\|x-y\|<0, \ \ \forall i\in I,
	\\
	&\langle z_i^{*U}, \nu\rangle +\epsilon_i^L\|x-y\|<0, \ \ \forall i\in I,
	\\
	&\langle x^*_t, \nu\rangle \leq 0, \ \  \forall t\in T(\mu), 
	\\
	&\text{and}\ \ \langle b^*,\nu\rangle\leq \|x-y\|,\ \ \forall b^*\in\mathbb{B}_{\mathbb{R}^n}.
	\end{align*}
	Since $\sum_{i\in I}(\lambda_i^L+\lambda_i^U)=1$, one has
	\begin{align*}
	\sum_{i\in I}\big[\langle \lambda_i^Lz_i^{*L}&+\lambda_i^Uz_i^{*U}, \nu\rangle\big]+\sum_{t\in T}\langle x^*_t, \nu\rangle+\sum_{i\in I}(\lambda_i^L\epsilon_i^U+\lambda_i^U\epsilon_i^L)\langle b^*, \nu\rangle \leq 
	\\
	&\sum_{i\in I}\big[\langle \lambda_i^Lz_i^{*L}+\lambda_i^Uz_i^{*U}, \nu\rangle\big]+\sum_{t\in T}\langle x^*_t, \nu\rangle+\sum_{i\in I}(\lambda_i^L\epsilon_i^U+\lambda_i^U\epsilon_i^L)\|x-y\|<0.
	\end{align*}
	On the other hand, since  $\nu\in [N(y, \Omega)]^\circ$, $\omega^*\in N(y; \Omega)$, and \eqref{equ:6}, we have 
	\begin{equation*}
	0\leq \langle -\omega^*, \nu\rangle= \sum_{i\in I}\big[\langle \lambda_i^Lz_i^{*L}+\lambda_i^Uz_i^{*U}, \nu\rangle\big]+\sum_{t\in T}\langle x^*_t, \nu\rangle+\sum_{i\in I}(\lambda_i^L\epsilon_i^U+\lambda_i^U\epsilon_i^L)\langle b^*, \nu\rangle <0,
	\end{equation*}
	a contradiction, which completes the proof of (i).
	
	We now prove (ii). Suppose on the contrary that 
	\begin{equation*}
	f(x)\preceq_{LU} \mathcal{L}(y, \lambda^L, \lambda^U, \mu)-\mathcal{E}\|x-y\|.
	\end{equation*} 
	This implies that 
	\begin{equation*}
	\begin{cases}
	f_i(x)\leq_{LU} f_i(y)-\mathcal{E}_i\|x-y\|, \ \ \forall i\in I,
	\\
	f_k(x)<_{LU} f_k(y)-\mathcal{E}_k\|x-y\|, \ \ \text{for at least one} \ \ k\in I.
	\end{cases}
	\end{equation*}
	Hence, $x\neq y$.  Therefore, by the strictly $\mathcal{E}$-pseudo-quasi generalized convexity of $(f, g_T)$ at $y$, \eqref{equ:6}, and by the same argument as in the proof of  part (i), we can get the contradiction. The proof is completed.  
\end{proof}
The following example shows that the approximate pseudo-quasi generalized convexity of $(f,g_T)$ on $\Omega$ used in Theorem \ref{weak-dual} cannot be omitted.

\begin{example}\label{Example-3}
	{\rm  Let $m=1$, $f\colon \mathbb{R}\to\mathcal{K}_c$ be defined by $f(x)=[f^L(x), f^U(x)]$, where 
		$$f^L(x)=\frac{1}{3}x^3 \ \ \text{and}\ \ f^U(x)=\frac{1}{3}x^3+1.$$
		Consider problem \eqref{problem} with $g_t(x)=-tx$ for all $x\in\mathbb{R}$ and $t\in T:=[0, 1]$, $\Omega=\mathbb{R}$ and $\mathcal{E}=[\frac{1}{5}, \frac{1}{4}]$.  Let $\bar y=1$, $\bar \lambda^L=\bar \lambda^U=\frac{1}{2}$, $\bar \mu_t=0$, $\forall t\in [0, 1)$, and $\bar \mu_1=1$. It is easy to see that $(\bar y, \bar\lambda^L,\bar \lambda^U, \bar \mu)\in \mathcal{F}_{MW}$. However, for $\bar x=0\in\mathcal{F}$, we have
		\begin{align*}
		f(\bar x)=[0, 1]&<^s_{LU} \mathcal{L}(\bar y, \bar\lambda^L, \bar \lambda^U, \bar{\mu})
		\\
		&=\bigg[\frac{1}{3}, \frac{4}{3}\bigg]-\bigg[\frac{1}{5}, \frac{1}{4}\bigg]=\bigg[\frac{1}{12}, \frac{17}{15}\bigg].
		\end{align*}
		This means that the conclusion of Theorem \ref{weak-dual} is no longer true. The reason is that the $\mathcal{E}$-pseudo-quasi generalized convexity of $(f,g_T)$ on $\Omega$ at $\bar y$ has been violated. 
		\begin{center}
			\begin{figure}[htp]
				\begin{center}
					\begin{tikzpicture}[]
					\begin{axis}[axis lines=center,
					axis on top,
					xtick=\empty,
					ytick=\empty,
					xrange=-5:5,
					yrange=-5:5
					]
					\addplot[domain=-5:5,y domain=-5:5,red]
					{x^3/3};
					\addlegendentry{$f^L(x)$}
					\addplot[domain=-5:5,y domain=-5:5,blue]
					{x^3/3+1};
					\addlegendentry{$f^U(x)$}
					\end{axis}
					\end{tikzpicture}
					\caption{Plot of $f^L$ and $f^U$ in Example \ref{Example-3}.}\label{Fig.3}
				\end{center}
			\end{figure}
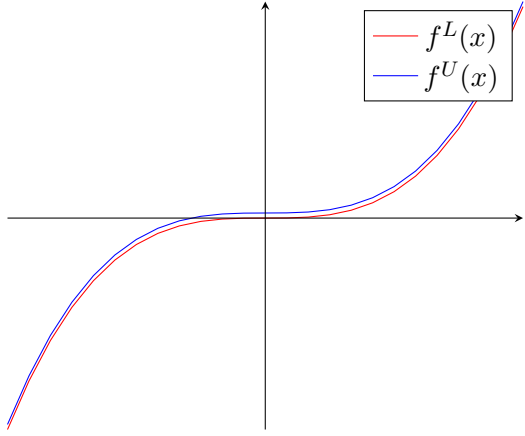
		\end{center}
	}
\end{example}  

Next we present a theorem that formulates strong duality relations between the primal problem \eqref{problem} and the dual problem \eqref{Dual-problem}.
\begin{theorem}[$\mathcal{E}$-strong duality] Let $\bar x$ be a type-2 $\mathcal{E}$-quasi-weakly Pareto solution of \eqref{problem} and assume that the \eqref{LCQ} holds at this point. Then there exist $\bar \lambda^L, \bar\lambda^U\in \mathbb{R}^m_+$, and $\bar \mu\in A(\bar x)$ such that $(\bar x, \bar \lambda^L, \bar \lambda^U, \bar \mu)\in \mathcal{F}_{MW}$, $f(\bar x)=\mathcal{L}(\bar x, \bar \lambda^L, \bar\lambda^U, \bar \mu)$. Furthermore,
	\begin{enumerate}[\rm(i)]
		\item If $(f, g_T)$ is $\mathcal{E}$-pseudo-quasi generalized convex on $\Omega$ at $\bar x$, then $(\bar x, \bar \lambda^L, \bar \lambda^U, \bar \mu)$ is a type-2 $\mathcal{E}$-quasi weakly Pareto solution of \eqref{Dual-problem}.
		\item If $(f, g_T)$ is strictly $\mathcal{E}$-pseudo-quasi generalized convex on $\Omega$ at $\bar x$, then $(\bar x, \bar \lambda^L, \bar \lambda^U, \bar \mu)$ is a type-1 $\mathcal{E}$-quasi  Pareto solution of \eqref{Dual-problem}.
	\end{enumerate}
\end{theorem}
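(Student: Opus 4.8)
The plan is to split the assertion into its ``existence'' part and its two optimality claims, getting the former from the KKT necessary condition (Theorem~\ref{Necessary-Theorem}) and the latter from the weak duality relations (Theorem~\ref{weak-dual}).

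\emph{Existence part.} Since $\bar x\in\mathcal F$, $\bar x\in\mathcal E$-$\mathcal S_2^{qw}\eqref{problem}$ and \eqref{LCQ} holds at $\bar x$, Theorem~\ref{Necessary-Theorem} supplies $\bar\lambda^L,\bar\lambda^U\in\mathbb{R}^m_+$ with $\sum_{i\in I}(\bar\lambda_i^L+\bar\lambda_i^U)=1$ and $\bar\mu\in A(\bar x)$ for which \eqref{Necessary-condition} holds. I would then check the quadruple $(\bar x,\bar\lambda^L,\bar\lambda^U,\bar\mu)$ against the defining clauses of $\mathcal F_{MW}$ one by one: $\bar x\in\Omega$ because $\mathcal F\subseteq\Omega$; the inclusion in the definition of $\mathcal F_{MW}$, evaluated at $y=\bar x$, is exactly \eqref{Necessary-condition} (the ball term $\sum_{i\in I}(\bar\lambda_i^L\epsilon_i^U+\bar\lambda_i^U\epsilon_i^L)\mathbb{B}_{\mathbb{R}^n}$ matching verbatim); $\bar\mu_tg_t(\bar x)=0\geq 0$ for all $t\in T$ because $\bar\mu\in A(\bar x)$; and $\sum_{i\in I}(\bar\lambda_i^L+\bar\lambda_i^U)=1$. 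Hence $(\bar x,\bar\lambda^L,\bar\lambda^U,\bar\mu)\in\mathcal F_{MW}$, and $f(\bar x)=\mathcal L(\bar x,\bar\lambda^L,\bar\lambda^U,\bar\mu)$ is immediate from the definition $\mathcal L(y,\lambda^L,\lambda^U,\mu):=f(y)$.

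\emph{Optimality claims.} For (i) I would argue by contradiction. If $(\bar x,\bar\lambda^L,\bar\lambda^U,\bar\mu)$ is not a type-2 $\mathcal E$-quasi weakly Pareto solution of \eqref{Dual-problem}, then, by the dual analogue of Definition~\ref{Defi-solution}, there is $(y,\lambda^L,\lambda^U,\mu)\in\mathcal F_{MW}$ with $\mathcal L_i(y,\lambda^L,\lambda^U,\mu)-\mathcal E_i\|y-\bar x\|>^s_{LU}\mathcal L_i(\bar x,\bar\lambda^L,\bar\lambda^U,\bar\mu)$ for all $i\in I$; using $\mathcal L(\bar x,\bar\lambda^L,\bar\lambda^U,\bar\mu)=f(\bar x)$ and $\mathcal L(y,\lambda^L,\lambda^U,\mu)=f(y)$, this reads $f(\bar x)\prec^s_{LU}\mathcal L(y,\lambda^L,\lambda^U,\mu)-\mathcal E\|\bar x-y\|$. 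Since $\bar x\in\mathcal F$ and $(y,\lambda^L,\lambda^U,\mu)\in\mathcal F_{MW}$, Theorem~\ref{weak-dual}(i) gives $f(\bar x)\nprec^s_{LU}\mathcal L(y,\lambda^L,\lambda^U,\mu)-\mathcal E\|\bar x-y\|$, a contradiction. Part (ii) follows by the same template: a failure of $(\bar x,\bar\lambda^L,\bar\lambda^U,\bar\mu)$ to be a type-1 $\mathcal E$-quasi Pareto solution of \eqref{Dual-problem} would produce $(y,\lambda^L,\lambda^U,\mu)\in\mathcal F_{MW}$ with $f(\bar x)\preceq_{LU}\mathcal L(y,\lambda^L,\lambda^U,\mu)-\mathcal E\|\bar x-y\|$, and necessarily $y\neq\bar x$ (otherwise the strict component would read $f_k(\bar x)<_{LU}f_k(\bar x)$, which is impossible); then Theorem~\ref{weak-dual}(ii) yields $f(\bar x)\npreceq_{LU}\mathcal L(y,\lambda^L,\lambda^U,\mu)-\mathcal E\|\bar x-y\|$, a contradiction.

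\emph{Main obstacle.} Beyond the largely bookkeeping verification of the clauses of $\mathcal F_{MW}$, the step that needs the most care is the invocation of Theorem~\ref{weak-dual} in (i) and (ii): in that theorem the (strict) $\mathcal E$-pseudo-quasi generalized convexity is attached to the dual point, whereas here it is available at $\bar x$. I would therefore re-examine which feasible point actually carries the convexity inside the contradiction argument of Theorem~\ref{weak-dual} and, if needed, reproduce that argument directly: take the elements $z_i^{*L}\in\partial f_i^L(\bar x)$, $z_i^{*U}\in\partial f_i^U(\bar x)$, $x_t^*\in\partial g_t(\bar x)$, $b^*\in\mathbb{B}_{\mathbb{R}^n}$ and $\omega^*\in N(\bar x;\Omega)$ realising \eqref{Necessary-condition}, the vector $\nu\in[N(\bar x;\Omega)]^\circ$ provided by the convexity hypothesis against the competing point $y$, the inequality $\langle\omega^*,\nu\rangle\geq 0$, and $\sum_{i\in I}(\bar\lambda_i^L+\bar\lambda_i^U)=1$, feed the strict inequalities of the contradiction hypothesis into \eqref{equ:3} (resp.\ \eqref{equa:8}), and combine everything to reach the impossible chain $0\leq\langle-\omega^*,\nu\rangle<0$, exactly as in the proofs of Theorems~\ref{Sufficient-Theorem} and~\ref{weak-dual}. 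Correctly aligning the direction of the strict $\prec^s_{LU}$-inequality with the three groups of implications in \eqref{equ:3} (resp.\ \eqref{equa:8}) is the crux.
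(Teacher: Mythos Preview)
Your existence step and the contradiction-plus-weak-duality argument for (i) and (ii) are exactly the paper's proof: the paper invokes Theorem~\ref{Necessary-Theorem} for dual feasibility, notes $f(\bar x)=\mathcal L(\bar x,\bar\lambda^L,\bar\lambda^U,\bar\mu)$ by definition, and then quotes Theorem~\ref{weak-dual}(i) (resp.\ (ii)) directly to conclude, without addressing the convexity-point mismatch you single out as your ``main obstacle''.

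On that obstacle, however, your fallback does not work as written. Assuming for contradiction $f(\bar x)\prec^s_{LU}f(y)-\mathcal E\|y-\bar x\|$ gives, componentwise, $f_i^L(y)>f_i^L(\bar x)+\epsilon_i^U\|y-\bar x\|$. The implication in \eqref{equ:3}, with the convexity pinned at $\bar x$ and tested against the competing point $y$, reads
\[
\langle z_i^{*L},\nu\rangle+\epsilon_i^U\|y-\bar x\|\geq 0\ \Rightarrow\ f_i^L(y)\geq f_i^L(\bar x)-\epsilon_i^U\|y-\bar x\|.
\]
The contradiction hypothesis \emph{confirms} this conclusion rather than negating it, so contraposition gives no information on the sign of $\langle z_i^{*L},\nu\rangle$, and the chain $0\leq\langle-\omega^*,\nu\rangle<0$ cannot be assembled. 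The same obstruction hits the $U$-components and part (ii). In short, the KKT data at $\bar x$ together with convexity at $\bar x$ only controls how $f(y)$ sits relative to $f(\bar x)-\mathcal E\|y-\bar x\|$, which is the \emph{opposite} direction to what dual optimality requires; the weak-duality mechanism genuinely needs the (strict) $\mathcal E$-pseudo-quasi generalized convexity at the competing dual point $y$, not at $\bar x$. The paper's proof simply invokes Theorem~\ref{weak-dual} without confronting this, so your concern is well-founded, but your proposed repair does not close the gap.
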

\begin{proof} By Theorem \ref{Necessary-Theorem}, there exist $\bar\lambda^L, \bar\lambda^U\in \mathbb{R}^m_+$, and $\bar\mu\in A(\bar x)$ satisfying $(\bar x, \bar\lambda^L, \bar\lambda^U, \bar\mu)\in \mathcal{F}_{MW}$. Clearly, 
	$$f(\bar x)=\mathcal{L}(\bar x, \bar\lambda^L, \bar\lambda^U, \bar\mu).$$
	(i) If $(f, g_T)$ is $\mathcal{E}$-pseudo-quasi generalized convex on $\Omega$ at $\bar x$, then, by invoking now (i) of Theorem \ref{weak-dual}, we obtain
	\begin{equation*}
	f(\bar x)=\mathcal{L}(\bar x, \bar\lambda^L, \bar\lambda^U, \bar\mu)\nprec^s_{LU} L(y,\lambda^L,\lambda^U, \mu)-\mathcal{E}\|\bar x-y\|
	\end{equation*}    
	for all $(y,\lambda^L,\lambda^U, \mu)\in\mathcal{F}_{MW}$. Therefore, $(\bar x, \bar\lambda^L, \bar\lambda^U, \bar\mu)$ is a type-2 $\mathcal{E}$-quasi weakly Pareto solution of \eqref{Dual-problem}. 
	
	The proof of (ii) is similar to that of (i) by using the strictly $\mathcal{E}$-pseudo-quasi generalized convexity of $(f, g_T)$ on $\Omega$ at $\bar x$ instead of the $\mathcal{E}$-pseudo-quasi generalized convexity of $(f, g_T)$ on $\Omega$ at the corresponding point. 
\end{proof}
We close this section by presenting converse-like duality relations for approximate quasi Pareto solutions between the primal problem \eqref{problem}  and the dual problem  \eqref{Dual-problem}.
\begin{theorem}[Converse-like duality] Let $(\bar x, \bar\lambda^L, \bar\lambda^U, \bar\mu)\in \mathcal{F}_{MW}$.
	\begin{enumerate}[\rm(i)]
		\item If $\bar x\in\mathcal{F}$ and $(f, g_T)$ is $\mathcal{E}$-pseudo-quasi generalized convex on $\Omega$ at $\bar x$, then $\bar x$ is a type-2 $\mathcal{E}$-quasi weakly Pareto solution of \eqref{problem}. 
		\item If $\bar x\in\mathcal{F}$ and $(f, g_T)$ is strictly $\mathcal{E}$-pseudo-quasi generalized convex on $\Omega$ at $\bar x$, then $\bar x$ is a type-1 $\mathcal{E}$-quasi  Pareto solution of \eqref{problem}.
	\end{enumerate}	
\end{theorem}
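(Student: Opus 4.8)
The plan is to reduce both assertions directly to the weak duality theorem (Theorem~\ref{weak-dual}). Since $(\bar x,\bar\lambda^L,\bar\lambda^U,\bar\mu)\in\mathcal F_{MW}$, by definition of the dual feasible set we have $\bar x\in\Omega$, $\bar\mu_tg_t(\bar x)\geq 0$ for all $t\in T$, $\sum_{i\in I}(\bar\lambda_i^L+\bar\lambda_i^U)=1$, and
\begin{equation*}
0\in \sum_{i\in I}[\bar\lambda_i^L\partial f_i^L(\bar x)+\bar\lambda_i^U\partial f_i^U(\bar x)]+\sum_{t\in T}\bar\mu_t\partial g_t(\bar x)+\sum_{i\in I}(\bar\lambda_i^L\epsilon_i^U+\bar\lambda_i^U\epsilon_i^L)\mathbb{B}_{\mathbb{R}^n}+N(\bar x;\Omega).
\end{equation*}
This is precisely condition~\eqref{Necessary-condition} evaluated at $\bar x$ with the multipliers $(\bar\lambda^L,\bar\lambda^U,\bar\mu)$. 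Moreover, since $\bar x\in\mathcal F$ we also have $g_t(\bar x)\leq 0$ for all $t\in T$; combined with $\bar\mu_tg_t(\bar x)\geq 0$ and $\bar\mu_t\geq 0$, this forces $\bar\mu_tg_t(\bar x)=0$ for all $t\in T$, so in fact $\bar\mu\in A(\bar x)$.

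For part~(i), I would simply apply Theorem~\ref{Sufficient-Theorem}(i): the hypotheses required there are exactly that $\bar x\in\mathcal F$, that there exist $\lambda^L,\lambda^U\in\mathbb{R}^m_+$ with $\sum_{i\in I}(\lambda_i^L+\lambda_i^U)=1$ and $\mu\in A(\bar x)$ satisfying~\eqref{Necessary-condition}, and that $(f,g_T)$ is $\mathcal{E}$-pseudo-quasi generalized convex on $\Omega$ at $\bar x$. All of these are in hand, so the conclusion $\bar x\in\mathcal{E}$-$\mathcal S_2^{qw}\eqref{problem}$ follows immediately. For part~(ii), I would invoke Theorem~\ref{Sufficient-Theorem}(ii) in the same way, using the strict $\mathcal{E}$-pseudo-quasi generalized convexity of $(f,g_T)$ on $\Omega$ at $\bar x$, to obtain $\bar x\in\mathcal{E}$-$\mathcal S_1^{q}\eqref{problem}$.

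Alternatively, one can argue directly through Theorem~\ref{weak-dual}: for any $x\in\mathcal F$, since $(\bar x,\bar\lambda^L,\bar\lambda^U,\bar\mu)\in\mathcal F_{MW}$ and $(f,g_T)$ is ($\mathcal E$-pseudo-quasi generalized, resp.\ strictly so) convex on $\Omega$ at $\bar x$, Theorem~\ref{weak-dual}(i) (resp.\ (ii)) with the roles "$x$" and "$y=\bar x$" gives $f(x)\nprec^s_{LU}\mathcal L(\bar x,\bar\lambda^L,\bar\lambda^U,\bar\mu)-\mathcal E\|x-\bar x\|$ (resp.\ $f(x)\npreceq_{LU}\cdots$). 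Since $\mathcal L(\bar x,\bar\lambda^L,\bar\lambda^U,\bar\mu)=f(\bar x)$, this says there is no $x\in\mathcal F$ with $f_i(x)<^s_{LU}f_i(\bar x)-\mathcal E_i\|x-\bar x\|$ for all $i\in I$ (resp.\ no $x\in\mathcal F$ with $f_i(x)\leq_{LU}f_i(\bar x)-\mathcal E_i\|x-\bar x\|$ for all $i$ and strict for some $k$), which is exactly the definition of $\bar x\in\mathcal E$-$\mathcal S_2^{qw}\eqref{problem}$ (resp.\ $\bar x\in\mathcal E$-$\mathcal S_1^{q}\eqref{problem}$). There is essentially no obstacle here; the only point meriting a line of justification is the observation above that feasibility of $\bar x$ in both $\mathcal F$ and $\mathcal F_{MW}$ upgrades the dual constraint $\bar\mu_tg_t(\bar x)\geq 0$ to the complementarity condition $\bar\mu\in A(\bar x)$, which is what lets the earlier theorems be applied verbatim.
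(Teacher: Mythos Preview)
Your proposal is correct and matches the paper's proof: extract the KKT-type inclusion and $\bar\mu_t g_t(\bar x)\geq 0$ from dual feasibility, combine with $\bar x\in\mathcal F$ to get $\bar\mu\in A(\bar x)$, and then apply Theorem~\ref{Sufficient-Theorem}(i) and~(ii). One small presentational point: your opening sentence announces a reduction to the weak duality theorem, but your primary argument in fact reduces to Theorem~\ref{Sufficient-Theorem}; the weak-duality alternative you sketch afterward is also valid, though the paper does not take that route.
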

\begin{proof} (i) Since $(\bar x, \bar\lambda^L, \bar\lambda^U, \bar\mu)\in \mathcal{F}_{MW}$, we have
	\begin{equation*}
	0\in \sum_{i\in I}[\bar\lambda_i^L\partial f_i^L(\bar x)+\bar\lambda_i^U\partial f_i^U(\bar x)]+\sum_{t\in T}\bar \mu_t\partial g_t(\bar x)+\sum_{i\in I}(\bar \lambda_i^L\epsilon_i^U+\bar \lambda_i^U\epsilon_i^L)\mathbb{B}_{\mathbb{R}^n}+N(\bar x; \Omega),
	\end{equation*}	
	$\sum_{i\in I}(\bar\lambda_i^L+\bar\lambda_i^U)=1$, and 
	\begin{equation}\label{equ:9}
	\bar \mu_t g_t(\bar x)\geq 0, \ \ \forall t\in T.
	\end{equation}
	It follows from  $\bar x\in\mathcal{F}$ and \eqref{equ:9} that $\bar \mu_t g_t(\bar x)= 0$ for all $t\in T$, i.e., $\bar \mu\in A(\bar x)$. By the  $\mathcal{E}$-pseudo-quasi generalized convexity of $(f, g_T)$ on $\Omega$ at $\bar x$ and  Theorem \ref{Sufficient-Theorem}(i), $\bar x$ is a type-2 $\mathcal{E}$-quasi weakly Pareto solution of \eqref{problem}. 
	
	(ii) The proof of (ii) is quite similar to that of (i) by using the strictly $\mathcal{E}$-pseudo-quasi generalized convexity of $(f, g_T)$  and Theorem \ref{Sufficient-Theorem}(ii), so it is omitted.
\end{proof}


\section{Conclusions}\label{Conclusions}
In this paper, we focus for the first time on studying approximate solutions of semi-infinite interval-valued vector optimization problems. By employing    the Mordukhovich/limiting subdifferential and the Mordukhovich/limiting normal cone and introducing some types of approximate pseudo-quasi generalized convexity of a family of locally Lipschitz functions, we establish optimality conditions of KKT-type and duality relations for proposed approximate solutions of the considered problem. Observe that the class of approximate pseudo-quasi generalized convex functions is properly wider than the class of convex functions  in the sense of \cite{Chuong-14,Chuong-Kim-16,Rockafellar-70}, our results generalize  and improve some  existing results. Furthermore, the model of problems of the form \eqref{problem} covers the one of cone-constrained convex vector optimization problems and semidefinite vector optimization problems,  and so our approach can be developed to study these problems. We aim to investigate this problem in future work.       

\section*{Acknowledgments}{}
We would like to thank the anonymous referee for insightful comments and valuable suggestions.

\section*{Disclosure statement}
No potential conflict of interest was reported by the authors.

\section*{Funding}
This research is funded by Hanoi Pedagogical University 2 under grant number HPU2.UT-2021.15. 

\end{document}